\documentclass[12pt,leqno]{article}

\usepackage{palatino,mathrsfs}
\usepackage[mathcal]{euler}

\usepackage{xcolor}

\usepackage{graphicx} 
\usepackage{epstopdf,epsfig}

\usepackage{comment} 
\usepackage{amsmath,amsthm,amsfonts,amssymb,latexsym,amscd,enumerate,url,hyperref}
\usepackage{amssymb}
\usepackage{url}
\usepackage{graphicx}
\usepackage[all,knot]{xy}
\xyoption{arc}

\usepackage{chngcntr}
 \counterwithin*{equation}{section}
  \counterwithin{equation}{subsection}

\usepackage{multirow}


\theoremstyle{plain}
\newtheorem*{theorem}{Theorem}
\newtheorem*{lemma}{Lemma}

\theoremstyle{definition}
\newtheorem*{defn}{Definition}
\newtheorem*{example}{Example}

\theoremstyle{remark}
\newtheorem*{remark}{Remark}



\newcommand{\Gr}{\operatorname{Gr}}
\newcommand{\ord}{\operatorname{ord}}

\begin{document}

\title{Algebraic Families of Harish-Chandra Pairs}
 
\author{Joseph Bernstein\thanks{School of Mathematical Sciences, Tel Aviv University, Tel Aviv 69978,
Israel.} , \ Nigel Higson\thanks{Department of Mathematics, Penn State University, University Park, PA 16802, USA.} \ and Eyal Subag\footnotemark[2]}

 
 
 
\date{}

\maketitle


\begin{abstract}
\noindent Mathematical physicists have studied degenerations of Lie groups and their representations, which they call contractions.   In this paper we study these contractions, and also other families, within the framework of  algebraic families of Harish-Chandra modules.  We   construct a family that incorporates both a real reductive group and its compact form,   separate parts of which have been studied individually as contractions.    We    give a complete classification of generically irreducible families of Harish-Chandra modules in the case of the family associated to $SL(2,\mathbb R)$.
\end{abstract}

\setcounter{tocdepth}{2}

\section{Introduction}
The purpose of this paper is to examine from an algebraic perspective  families of Lie groups and families of representations, including examples that have long been studied in the mathematical physics literature under the name of \emph{contractions} of groups and representations.  We shall set up a general framework, but mainly we shall study the simplest examples and identify   phenomena that might deserve further study.  
 
Contractions were first studied by Segal in \cite{Segal51}, and by In\"{o}n\"{u} and Wigner in \cite{Inonu-Wigner53}. Their main goal was to study how the  symmetries of a physical system can  change  in various limiting circumstances (for example in the limit as the speed of light becomes infinite).    
This led them towards  a deformation theory for Lie groups and their representations, which they developed in various examples of physical interest. 

A simple  example is  the contraction of $SL(2,\mathbb{R})$ to the semidirect product group  $SO(2)\ltimes \mathbb{R}^2$.  
It consists of a smooth family of groups $\{G_{t}\}$ with  $G_{t} = SL(2,\mathbb{R})$ for all $t \ne 0$ and    $G_0 = SO(2)\ltimes \mathbb{R}^2$, the Cartan motion group of $SL(2,\mathbb{R})$.  It is known that any  infinite-dimensional unitary irreducible   representation of $G_0$  can be obtained as a   limit of a suitable smooth family of representations of the groups $G_t$; see   \cite{Dooley85, Subag12}. 

There is also a contraction    of $SU(2)$ to the semidirect product group $ SO(2)\ltimes \mathbb{R}^2$.  Once again,  every infinite-dimensional  unitary irreducible representation of the semidirect product   can be obtained as a suitable limit of     representations of $SU(2)$  \cite{Dooley85,Dooley83,Subag12}. But in this case the approximating representations are only defined for a \emph{discrete} sequence of parameter values converging to zero, and the representations   that approximate the given infinite-dimensional representation of $G_0$ are themselves   \emph{finite-dimensional}.   This makes it a challenge to accurately express the approximation  in mathematical terms.

We shall study these phenomena  by changing the context in various   ways.

First  we shall study families of groups that \emph{change type}  in the sense that their isomorphism classes will vary from fiber to fiber. One such family  will  be
\[
   G_t\cong \begin{cases}
SL(2,\mathbb{R}) & t>0\\
SO(2)\ltimes \mathbb{R}^2& t=0\\
SU(2)& t<0 ,
\end{cases} 
\]
which combines the two contraction families given above.

  Secondly, we shall consistently  study   families   that are param\-etrized \emph{continuously}, rather than by a discrete space. We shall  mostly deal with families over a  line, or over the  completion to a projective line.

Thirdly, we shall  study families of   representations    \emph{algebraically}, as  families of Harish-Chandra modules.  In this way we obtain sufficiently many families for our purposes, whereas there are too few continuous families of global representations in the examples we study.  Our focus will therefore be on {algebraic} families of Harish-Chandra pairs, rather than  smooth families of groups. We shall use the language of algebraic geometry and work  over the complex field; as usual, real families will be recovered using involutions.

In summary our starting point will be an algebraic family of Harish-Chandra pairs parametrized by   a line  or more generally  by a complex algebraic variety. Precise definitions will be given in Section~2, where we aim to present a formalism suitable for the study of a wide range of contractions and other families of groups.

We shall give two general constructions of algebraic families of Harish-Chandra pairs.  The first constructs  from an arbitrary  Harish-Chandra pair    a canonical \emph{deformation family}  of  pairs over the projective line using the deformation to the normal cone construction in geometry.  See Paragraph~\ref{deformation_family}. 
The second construction starts from a Harish-Chandra pair $(\mathfrak{g},K)$ that  is   \emph{symmetric}  in the sense that it is equipped with a $K$-equivariant involution of $\mathfrak{g}$ whose fixed subalgebra is the Lie algebra of $K$.  We obtain from this an algebraic \emph{contraction family} of Harish-Chandra pairs over the projective line, which is a version of the smooth family of groups displayed above.  See Paragraph~\ref{contraction_family}. Other interesting constructions are also possible.  

In both  of the algebraic families  that we shall study the generic member of the family will be (isomorphic to) a fixed Harish-Chandra pair $(\mathfrak{g},K)$.   So when we study families of Harish-Chandra modules we shall be  in particular studying families of Harish-Chandra modules for the fixed pair  $(\mathfrak{g},K)$.    It is of course common in representation theory to encounter representations in families rather than individually. Usually this happens within the context of parabolic induction, and the parameter space is an affine variety.  One aspect of our study is an investigation of how these parabolically induced affine families may be compactified over a projective variety (in the cases studied here, this is just the projective line).  

  We  shall introduce  real structures on the complex families that we consider (associated to a  real structure on the Harish-Chandra pair $(\mathfrak{g},K)$ that we start from) in the usual way.   In a companion paper  we shall use    Jantzen filtration techniques to recover from our complex algebraic families of Harish-Chandra modules the discrete families of finite-dimensional representations of compact groups originally considered in the mathematical physics literature.  In this way  the formalism of algebraic families and Harish-Chandra pairs places the discrete approximation phenomenon with\-in an algebraic context, and construction of Jantzen recovers the phen\-om\-enon from the algebra.

The same techniques have other uses.  For instance we can  recover the ``Mackey bijection'' \cite{Mackey75,Higson08,Higson11}  between the (tempered or admissible) dual of $SL(2,\mathbb{R})$ and that of  its Cartan motion group $SO(2) \ltimes \mathbb{R}^2$.   Our approach is potentially quite general.

 Our focus is on families of Harish-Chandra pairs, but it is interesting to construct  families of \emph{groups} that give rise to families of Harish-Chandra pairs.  The deformation to the normal cone construction does this for the first of our families.  The situation for the  contraction family associated to a symmetric pair  is more complicated, but for many classical groups  we shall  give in Section 3 the construction of an underlying family of groups.  In the case of groups defined by bilinear forms, it is natural to use the (projectivization of the) space of all forms as a parameter space, and then perhaps restrict to interesting curves within this space, and this is what we do. We do not know how to handle the general case.

In Section 4, which is independent of Section 3, we shall give a classification theorem for  generically irreducible families of Harish-Chandra modules for the contraction family over $\mathbb{C}P^1$ that is associated to the pair 
\[
(\mathfrak{g}, K) = \left (\mathfrak{sl}(2,\mathbb{C}), SO(2,\mathbb{C})\right ).
\]
Apart from the expected parameters involving the Casimir element and $K$-types, we shall explain how simple topological invariants fit into the classification of families over $\mathbb C P^1$.  (We shall also see that there are  too many generically irreducible families to admit a reasonable classification, and we shall impose  some restrictions on the families considered, again of a topological nature.)

The topics covered here are obviously just a sampling from among the phenomena involving families of representations that one might study, and even for families related to $SL(2,\mathbb{R})$  there is still plenty to be done.  We hope that this paper will serve as an invitation to the further study of families from an algebraic point of view.

The first and third authors were partially supported by ERC grant 291612. Part of the work on this project was done at Max-Planck Institute for Mathematics, Bonn. We would like to thank MPIM for the very stimulating atmosphere. The second author was partially supported by NSF grant DMS-1101382.

\section{Algebraic Families}
\label{section2}
 
In this section we shall give  the family versions of a number of standard  constructions in representation theory. Namely, we shall define  families of Lie algebras, families of algebraic groups, families of Harish-Chandra pairs, and  families of  Harish-Chandra  modules. At the end of the section we shall consider involutions and real structures.

All of the  objects and morphisms appearing in this section and the rest of the paper  will be algebraic and defined over the field of complex numbers.  For example, by  a vector bundle we shall mean a complex algebraic vector bundle, by a section we shall mean an algebraic section and so on. 

By a \emph{variety} we shall always mean an irreducible,  nonsingular,  quasi-projective,  complex algebraic variety, and for any   variety, $X$, we shall denote by $O_X$ the structure sheaf of regular functions on $X$.

\subsection{Families of Lie algebras }
\begin{defn}
Let $X$ be a variety. \textit{An algebraic family of   Lie algebras   over $X$}  is  a locally free sheaf of $O_X$-modules, that is equipped with $O_X$-linear Lie brackets which make it a sheaf of Lie algebras.  
A \emph{morphism} of algebraic families of Lie algebras over $X$ is a morphism of $O_X$-modules  that commutes with the Lie brackets.
\label{def1}
\end{defn}

All of the families of Lie algebras that we shall consider will have  finite-dimensional fibers. In the finite-dimensional case, an algebraic family of complex Lie algebras   is the same thing as the sheaf of sections of  an algebraic vector bundle whose fibers are equipped with Lie algebra structures that   vary algebraically.

\subsubsection{Constant and nonconstant families}
 
Let $X$ be a  variety and let $\mathfrak{g}$ be  a  complex Lie algebra.  The \emph{constant family over $X$ with fiber $\mathfrak{g}$} is $O_X\otimes \mathfrak{g}$ (tensor product over $\mathbb{C}$). 
A \emph{locally constant}  algebraic family of complex Lie algebras is a family that is locally  isomorphic to a constant family. 

Here is a simple example of an algebraic family that is \emph{not} locally constant.
Let  $X = \mathbb{C}$ and let $\mathfrak g$ be any  non-abelian complex Lie algebra. Make the sheaf $O_X \otimes \mathfrak{g}$  into an     algebraic family of  Lie algebras over the variety $\mathbb{C}$ by means of the formula
\[
 [s_1,s_2](z) = z [s_1(z),s_2(z)]
\]
for the Lie bracket of sections.  The fiber over $0$ is $\mathfrak{g}$ with the trivial Lie bracket.
 
\subsubsection{The deformation family associated to  a Lie  subalgebra} 
\label{deformation_family}

 Let $\mathfrak{g}$ be a complex Lie algebra and let $\mathfrak{k}$ be a Lie subalgebra of $\mathfrak{g}$.   We shall construct a family over the variety 
\[
X = \mathbb{C} 
\]
 with fibers
 \[
 \begin{cases}
\mathfrak{g} &z \neq 0\\
\mathfrak{k}\ltimes  \mathfrak{g}/\mathfrak{k}& z =0  
\end{cases}
\]
(it is essentially the  deformation to the normal cone construction in geometry, as described in   \cite{Fulton84,Fulton84-2}, for example).   Form the sheaf of regular functions on $X$ with values in $\mathfrak{g}$, and then form the subsheaf   that consists of functions whose value at $ 0\in X$  belongs to $\mathfrak{k}$.  One can show that this subsheaf is, in its own right,   locally free.  It is an algebraic family of Lie algebras in our sense, which we shall call the \emph{deformation family} associated to the inclusion $\mathfrak{k}\subseteq \mathfrak{g}$.

The restriction of the deformation family to the complement of $0\in X$ is the  constant family with fiber $\mathfrak{g}$.  The fiber at $0\in X$ is the semidirect product Lie algebra
$
\mathfrak{k}\ltimes  \mathfrak{g}/\mathfrak{k}$, as indicated above.

\subsubsection{The contraction family associated to a symmetric Lie subalgebra}
\label{contraction_family}

  The following construction of a second family over $X=\mathbb{C}$  is based upon the  In\"{o}n\"{u}-Wigner contractions of Lie algebras studied in \cite{Inonu-Wigner53}.  Let $\mathfrak{g}$ be a  complex Lie algebra, let 
\[
\theta\colon \mathfrak{g} \longrightarrow \mathfrak{g}
\]
be an involution, and let  
\begin{equation*}
\label{eq-cartan-decomp}
\mathfrak{g} = \mathfrak {k} \oplus \mathfrak{p}
\end{equation*}
be the associated  ``Cartan'' decomposition  into $+1$ and $-1$ eigenspaces of $\theta$, respectively.   Let $X = \mathbb{C}$.  Give the sheaf $O_X \otimes \mathfrak{g}$ the structure of a nonconstant algebraic family of Lie algebras by using the decomposition of sheaves
\[
O_X \otimes \mathfrak{g} = O_X \!\otimes\! \mathfrak{k} \, \oplus \, O_X\! \otimes \!\mathfrak{p}
\]
and by defining the Lie bracket operation on sections belonging to the individual summands by 
\[
[\eta, \zeta](z) =  \begin{cases}
z [\eta(z), \zeta(z)]_{\mathfrak g}   & \text{if $\zeta$  and $\eta$ are sections of $ O_X\! \otimes \!\mathfrak{p}$}   \\
\phantom{i} [\eta(z), \zeta(z)]_{\mathfrak g}  & \text{otherwise}.
\end{cases}
\]

\subsubsection{Base change}
Given an   algebraic family  $ { \boldsymbol{\mathfrak{h}}}$ of complex Lie algebras over $Y$ and a morphism of  varieties 
\[
\psi:X\longrightarrow Y,
\]
we can pull back  $\boldsymbol{\mathfrak{h}}$ to the  sheaf  of $O_X$-modules
\[
\psi^* \boldsymbol{\mathfrak{h}} = O_X\otimes_{\psi^{-1}O_Y}\psi^{-1}\boldsymbol{\mathfrak{h}} .
\]
  It has the structure of an algebraic family of Lie algebras over $X$.

\begin{example} 
Let $\mathfrak {g}$ be a Lie algebra equipped with an involution and a Cartan decomposition 
$\mathfrak{g} = \mathfrak{k}\oplus\mathfrak{p}$ 
 as in Paragraph \ref{contraction_family}.  Since  
$
\mathfrak{k} \ltimes \mathfrak{p} \cong \mathfrak{k}\ltimes  \mathfrak{g}/\mathfrak{k},
$
the  contraction family of Lie algebras  strongly resembles the deformation  family from  Paragraph \ref{deformation_family}.  But it is not isomorphic to it since    the deformation   family  is isomorphic to the sheaf $O_{\mathbb{C}} \otimes \mathfrak{g}$ over $\mathbb{C}$ with Lie bracket 
\[
[\eta, \zeta](z) =  \begin{cases}
z^2 [\eta(z), \zeta(z)]  & \text{if $\zeta$  and $\eta$ are sections of $ O_X\! \otimes \!\mathfrak{p}$}   \\
\phantom{z} [\eta(z), \zeta(z)]  & \text{otherwise}  
\end{cases}
\]
on homogeneous sections (note the appearance of $z^2$ in place of the monomial $z$ that is used in \ref{contraction_family}).
Denote this latter family over $\mathbb{C}$ by   $\boldsymbol{\mathfrak{g}}$, and   let   $\boldsymbol{\mathfrak{h}}$ be the contraction family over $\mathbb{C}$ from  Paragraph \ref{contraction_family}.  As sheaves of $O_X$-modules, $\boldsymbol{\mathfrak{g}}$  and  $\boldsymbol{\mathfrak{h}}$ are the same. However the identification is not a morphism of algebraic families of Lie algebras. Instead,   $\boldsymbol{\mathfrak{g}}$   is isomorphic to        $\psi^*\boldsymbol{\mathfrak{h}}$, where
\[
\psi \colon \mathbb{C} \longrightarrow \mathbb{C} , \quad \psi(z) = z^2.
\]
\end{example}

\subsubsection{Families of  modules}

Let $\boldsymbol{\mathfrak{g}}$ be an algebraic family of complex Lie algebras over $X$.  We want to distinguish between the concepts $\boldsymbol{\mathfrak{g}}$-\emph{module}   and    \emph{family of representations}  of $\boldsymbol{\mathfrak{g}}$.   A module, loosely speaking, is just a collection of modules over the individual fibers of $\boldsymbol{\mathfrak{g}}$:

\begin{defn}
Let $\boldsymbol{\mathfrak{g}}$ be an algebraic family of complex Lie algebras  over $X$. A \emph{$\boldsymbol{\mathfrak{g}}$-module} is  a quasicoherent $O_X$-module, $\mathcal{F}$, together with  a morphism of $O_X$-modules $\rho:\boldsymbol{\mathfrak{g}}\otimes _{O_X}\mathcal{F}\longrightarrow \mathcal{F}$   that  respects the Lie brackets.
\end{defn}

In contrast, for families of representations we require a degree of continuity from fiber to fiber: 

\begin{defn}
Let $\boldsymbol{\mathfrak{g}}$ be an algebraic family of complex Lie algebras  over $X$. A \emph{family of representations} over $\boldsymbol{\mathfrak{g}}$  is  a $\boldsymbol{\mathfrak{g}}$-module    that is flat as an $O_X$-module.
\end{defn}

Flatness is the correct technical notion, but it is not easy to understand  from a geometric perspective.  However in the examples that arise most immediately in representation theory  one is presented with  $\boldsymbol{\mathfrak{g}}$-modules that decompose into countable direct sums of coherent $O_X$-modules, and it is worth recalling that a coherent and flat $O_X$-module is locally free; see for example \cite[Proposition 9.2 (e)]{Hartshorne1977}.

\begin{example} If $\mathfrak g$ is a complex semisimple Lie algebra, and if $\mathfrak b =\mathfrak h \oplus \mathfrak n$ is a Borel subalgebra, then as $\lambda$ ranges over $\mathfrak h ^*$  the Verma modules 
 \[
 V_\lambda = \mathcal{U}(\mathfrak g ) \otimes _{\mathcal{U}(\mathfrak b )} \mathbb C_\lambda
 \]
  combine to form a module for the constant family of Lie algebras over $\mathfrak h^*$ with fiber $\mathfrak g$.  Other examples, including examples of modules over nonconstant families, will be given later. 
\end{example}

\subsection{Families of algebraic groups}
 \label{groupscheme}

\begin{defn}
 An \emph{algebraic family of  groups} over a  variety $X$ is a smooth morphism of varieties 
 \[
 \boldsymbol{G} \longrightarrow  X
 \]
  that carries the structure of a group scheme over $X$ (for details about group schemes see \cite[Tag 022R]{stacks-project}).   
    A morphism of algebraic families of groups over $X$ is a morphism of varieties over $X$ that is compatible with group structures. \end{defn}

\begin{remark}
In our context, thanks to our simplifying assumptions on the varieties we are studying,  the smoothness condition on the morphism $\pi$ from $ \boldsymbol{G} $ to $  X$ can be checked pointwise: it is equivalent to the surjectivity of the differential of $\pi$ on every tangent space. That is, a morphism $\pi$ is smooth if and only if it is a submersion in the $C^\infty$-sense.  See for example \cite[Proposition 10.4]{Hartshorne1977}.
\end{remark}

\begin{example}
If $G$ is a     complex   algebraic group and $X$ is a   variety, then of course we can form the constant family $\boldsymbol{G} = G\times X$.    
 \end{example}

\begin{example}
Let $G$ be a complex   algebraic  group, and let $K$ be an algebraic subgroup.  Associated to the inclusion of $K$ into $G$ there is the \emph{deformation to the normal cone}, as in \cite[Chapter 5]{Fulton84-2} for example, which is an algebraic family of groups over $\mathbb C \mathbb P ^1$.   This is the group-theoretic counterpart of the Lie algebraic deformation family in Paragraph \ref{deformation_family} (we defined the deformation family of complex Lie algebras over $\mathbb{C}$, but exactly the same definition may be given over $\mathbb{CP}^1$).
\end{example}

\begin{remark} 
 The construction of a group-theoretic counterpart of the contraction family in Paragraph \ref{contraction_family} is a more delicate matter, which will be considered in Section~\ref{section3}.
 \end{remark}

\begin{defn}
Let $\boldsymbol{G}$ be an algebraic family of  algebraic groups over $Y$, and let $\psi \colon X \to Y$ be a morphism of varieties.  The base change of  $\boldsymbol{G}$ with respect to $\psi$ is the  group scheme  
\[
\psi^*\boldsymbol{G}=X\times_{Y}\boldsymbol{G}
\]
over $X$.\end{defn}

\subsubsection{The associated family of Lie algebras}

Given an algebraic family of   groups   $\boldsymbol{G}$ over $X$ as in Section~\ref{groupscheme},  the associated algebraic family of Lie algebras,  denoted by  $\operatorname{Lie}(\boldsymbol{G})$, is the sheaf of vertical, right-invariant vector fields on $\boldsymbol G$, or in other words the sheaf of  $O_X$-linear  derivations $\xi$   of $O_{\boldsymbol G}$ for which the diagram 
\[
\xymatrix{
O_{\boldsymbol G}\ar[d]_{\xi}   \ar[r]  & 
	O_{\boldsymbol G} \otimes _{O_X} O_{\boldsymbol G} \ar[d]^{\xi\otimes 1}
	\\
O_{\boldsymbol G}    \ar[r]  & 
	O_{\boldsymbol G} \otimes _{O_X} O_{\boldsymbol G}  }
\]
is commutative (the horizontal maps are   multiplication   in $\boldsymbol{G}$).

According to our assumptions the identity section $X \to \boldsymbol{G}$ embeds $X$ as a smooth subvariety of the smooth variety $\boldsymbol{G}$. The sheaf $\operatorname{Lie}(\boldsymbol{G})$ is isomorphic to the sheaf of sections of the normal bundle of $X$ in $\boldsymbol{G}$  by restriction of vector fields to  $X$.

\subsubsection{Action of a family of groups on an $O_X$-module}

 If  $\boldsymbol{G}$ is an algebraic family of   groups   over $X$, and if $\mathcal{F}$ is a sheaf of $O_X$-modules, then an \emph{action} of $\boldsymbol{G}$ on $\mathcal{F}$ is a morphism of sheaves of $O_X$-modules
 \[
\xymatrix{
 \mathcal{F} \ar[r] & O_{\boldsymbol{G}} \otimes _{O_X} \mathcal{F}
 }
 \] 
 that is compatible with the multiplication and unit operations on $\boldsymbol{G}$ in the usual way.  This is a special case of the concept of equivariant sheaf; for further information  see  \cite{Bernstein-Lunts,Vistoli05}.

\begin{defn}
A \emph{family of representations} of $\boldsymbol{G}$ is a flat, quasicoherent sheaf of $O_X$-modules that is equipped with an action of $\boldsymbol{G}$.
\end{defn}

\subsubsection{Action of the associated family of Lie algebras}

If $\boldsymbol{G}$ acts on $\mathcal{F}$, then there is an induced action of $\operatorname{Lie}(\boldsymbol{G})$ defined by 
\[
\xymatrix{
\operatorname{Lie}(\boldsymbol{G})\otimes _{O_X} \mathcal{F} 
\ar[r] & 
\operatorname{Lie}(\boldsymbol{G})\otimes _{O_X} O_G \otimes_{O_X} \mathcal{F}  \ar[r]  & 
O_G\otimes _G \mathcal {F} \ar[r]  &
\mathcal{F}
}
\]
where the first arrow comes from the action of $\boldsymbol{G}$, the second from the action of $\operatorname{Lie}(\boldsymbol{G})$ by derivations, and the third from restriction along the identity section $X \to \boldsymbol{G}$.
 
\subsubsection{The adjoint action}

The identity section
\[
X \longrightarrow \boldsymbol{G}
\]
is invariant under the 
adjoint action 
\[
\boldsymbol{G} \times _X \boldsymbol{G} \longrightarrow \boldsymbol{G} ,
\]
 and so there is an induced adjoint action of $\boldsymbol{G}$ on the normal bundle for the image of the identity section, or in other words on the associated family of Lie algebras:
 \[
 \operatorname{Lie} (\boldsymbol{G}) \longrightarrow O_{\boldsymbol{G}} \otimes _{O_X}  \operatorname{Lie} (\boldsymbol{G}) 
 .
 \]
The action of $ \operatorname{Lie} (\boldsymbol{G})$ associated, in turn, to this action of $\boldsymbol{G}$ is the LIe bracket operation on $\operatorname{Lie}(\boldsymbol{G})$.

\subsubsection{Reductive families and admissible modules} 
\label{sec-reductive-families-and-admissibility}

\begin{defn}
 Let ${\boldsymbol{K}}$ be an algebraic family of groups over $X$. 
 We shall say that ${\boldsymbol{K}}$  is \emph{reductive}  if  for every $x\in  X$ the fiber  ${\boldsymbol{K}}|_x$ is a complex reductive algebraic group.  
\end{defn}

So a reductive family over $X$  is the same thing as a smooth, reductive group scheme over $X$. 

 \begin{defn}
Let ${\boldsymbol{K}}$ be a reductive  family of groups over $X$ and let $\mathcal{F}$ be a family of representations of $\boldsymbol{K}$.   We shall say that $\mathcal{F}$ is   \emph{admissible}   if    the sheaf of $O_X$-modules
$[ \mathcal{L}\otimes_{O_X} \mathcal{F}]^{\boldsymbol{K}}$ is locally free and of finite rank, for any      family $\mathcal{L}$ of representations of $\boldsymbol{K}$ that is locally free and of finite rank  as an $O_X$-module. 
\end{defn}

\begin{example} 
If  ${\boldsymbol{K}}=K\times X$  is a constant reductive family, then   $\mathcal{F}$ is admissible  if and only if the sheaf 
  \[
  \mathcal{H}om_{\boldsymbol{K}}(O_X\otimes_{\mathbb{C}} V\,,\, \mathcal{F}) \cong   [ \, V^*  \otimes _{\mathbb C} \mathcal F\, ]^{K}
  \]
   is coherent for every finite-dimensional representation $V$ of $K$.  In this case there is a canonical isotypical decomposition 
   \[
   \bigoplus _{\tau\in \widehat K}   \mathcal{F}_\tau
   \stackrel\cong\longrightarrow \mathcal F ,
   \]
indexed by the equivalence clases of irreducible (algebraic) representations of $K$, where
     \[
\mathcal{F}_\tau =  
V_\tau \otimes _{\mathbb C} \mathcal{H}om_{\boldsymbol{K}}(O_X\otimes_{\mathbb{C}} V_\tau\,,\, \mathcal{F}) .
   \]
   \end{example}

\subsection{Families of Harish-Chandra pairs}
 \label{para-hc-pair-family}
 
Finally we come to the main definition in this section: that of an algebraic  family of Harish-Chandra pairs.  There are small variations in the literature on the familiar concept of an individual Harish-Chandra pair.  The following definition will make clear which one we wish to adopt here (take $X$ to be a single point).

\begin{defn}
Let $X$ be a   variety. An \emph{algebraic family of  Harish-Chandra pairs}  over  $X$ consists of the following data:
\begin{enumerate}[\rm (a)]

\item  an algebraic family  $\boldsymbol{\mathfrak{g}}$ of   Lie algebras over $X$, and

\item an algebraic family ${\boldsymbol{K}}$ of   groups over $X$, 
\end{enumerate}
together with an  action   of $\boldsymbol{K}$ on  $\boldsymbol{\mathfrak{g}}$ by automorphisms, and 
a $\boldsymbol{K}$-equivariant  embedding of algebraic families of   Lie algebras over $X$, 
\[
j:\operatorname{Lie}(\boldsymbol{K})\longrightarrow {\mathfrak{g}},
\]
such that  the action of $\operatorname{Lie}(\boldsymbol{K})$ on $\boldsymbol{\mathfrak{g}}$ via $\operatorname{ad}_{\boldsymbol{\mathfrak{g}}} \circ j$ coincides with the differential of the action of $\boldsymbol{K}$ on $\boldsymbol{\mathfrak{g}}$.
\label{FHCP}
\end{defn}

 There are the obvious notions of morphism of families Harish-Chandra pairs over $X$, base change and morphism of families of Harish-Chandra pairs over different bases.  We omit the details.

\subsubsection{Deformation and contraction families of Harish-Chandra pairs}
\label{CanonicalFamily}
In Paragraph \ref{deformation_family} we associated to any pair of Lie algebras $\mathfrak{k}\subseteq  \mathfrak{g}$   a  deformation family $\boldsymbol{\mathfrak{g}}$ of Lie algebras over $\mathbb C $.   If $\mathfrak{k}$ is the Lie algebra of a complex algebraic group  $K$,  and if  $(\mathfrak{g},K)$ is a Harish-Chandra pair,  then we obtain from the deformation family an algebraic family of Harish-Chandra pairs 
$
(\boldsymbol{\mathfrak{g}},\boldsymbol{K})
$
in which $\boldsymbol{K}$ is the constant algebraic family of groups over $\mathbb C$ with fiber $K$.

If in addition $\mathfrak k \subseteq \mathfrak{g}$ is the fixed-point subalgebra of a $K$-equivariant involution on $\mathfrak{g}$,  and if instead of the deformation family we consider the contraction family of Lie algebras from Paragraph~\ref{contraction_family}, then we obtain a second family of Harish-Chandra pairs $(\boldsymbol{\mathfrak{g}}, \boldsymbol{K})$  over $\mathbb C $, in which $\boldsymbol K$ is once again the constant family of groups with fiber $K$.

 \subsection{Algebraic families of Harish-Chandra modules}

 \begin{defn}
Let $({\mathfrak{g}},{\boldsymbol{K}})$ be an algebraic family of   Harish-Chandra pairs over $X$. An \emph{algebraic family of Harish-Chandra modules} for $({\mathfrak{g}},{\boldsymbol{K}})$ is a flat, quasicoherent  $O_X$-module, $\mathcal{F}$, that is equipped with
\begin{enumerate}[\rm (a)]

\item an  action of $\boldsymbol{K}$ on $\mathcal{F}$, and

\item an  action  of  $\boldsymbol{\mathfrak{g}}$ on $\mathcal{F}$,

\end{enumerate}
such that  the action morphism
\[
 \boldsymbol{\mathfrak{g}}\otimes_{O_X}\mathcal{F}\longrightarrow \mathcal{F}
\]
 is $\boldsymbol{K}$-equivariant, and such that the differential of the $\boldsymbol{K}$-action in (a) is equal to the composition of the inclusion of $\operatorname{Lie}(\boldsymbol{K})$ into $\boldsymbol{\mathfrak{g}}$   with the action of   $\boldsymbol{\mathfrak{g}}$ on $\mathcal{F}$. 

\label{FHCM}
\end{defn}

\subsubsection{Quasi-admissible families of Harish-Chandra modules} 

Let $({\mathfrak{g}},{\boldsymbol{K}})$ be  an  algebraic family of   Harish Chandra pairs over $X$, and assume that $\boldsymbol{K}$ is a reductive algebraic family of groups over $X$.

\begin{defn}
An algebraic family   $\mathcal{F}$ of Harish-Chandra modules is  \emph{quasi-admissible} if the $\boldsymbol{K}$-action on $\mathcal F$ is admissible.
\end{defn}

\begin{remark}
\label{rem-admissible-versus-gen-admissible}
One can define a family to be \emph{admissible} if it is    quasi-admissible and   finitely generated.  But a more useful concept is probably that of a  \emph{generically admissible} family, which may be defined in analogy with the concept of \emph{generically irreducible} family; see Paragraph~\ref{par-generically-irreducible} below.
\end{remark}

\subsection{Real structures on algebraic families} 

In this section  we shall recall some definitions related to real structures on varieties, and make the natural definitions of real structures on algebraic families of Lie algebras and groups.

\subsubsection{Real structures on varieties}

Recall that if $X$ is a variety, then its \emph{complex conjugate}  $\overline X$ is the variety whose underlying  topological space is the same as that of $X$, and whose structure sheaf $O_{\overline{X}}$ is the complex conjugate of the sheaf $O_X$; this is the same sheaf of rings as $O_X$, but equipped with the complex conjugate scalar multiplication.   

The operation of complex conjugation is a functor from the category of varieties to itself, and the  composition of complex conjugation  with itself is   the identity functor.

\begin{defn} 
Let $X$ and $Y$ be varieties.  An \emph{antiholomorphic morphism} from $X$ to $Y$ is a morphism of algebraic varieties  
 $X\longrightarrow \overline{Y}$.
 \end{defn}
 
 \begin{defn}
 A \emph{real structure}, or \emph{antiholomorphic involution}, on $X$ is a morphism  $\sigma_X:X\longrightarrow \overline{X}$ such that  the composition 
\[
\xymatrix{
X\ar[r]^{\sigma_X} & \overline{X}\ar[r]^{\overline{\sigma_X}} &  X} 
\]
is the identity. \end{defn}

Compare for example \cite[Chapter 1]{Borel1991} or \cite[Chapter 11]{Springer} for all this.
 
\subsubsection{Real structures on $O_X$-modules}

Let $X$ be a variety and let $\mathcal{F}$ be a sheaf of  $O_X$-modules.  The complex conjugate sheaf $\overline{\mathcal{F}}$ is a sheaf of $O_{\overline{X}}$-modules, and complex conjugation is a functor from $O_X$-modules to $O_{\overline{X}}$-modules.

\begin{defn}
Given a real structure $\sigma_X$ on $X$, a  \emph{real structure}, or  \emph{antiholomorphic involution},  on $\mathcal{F}$ is a morphism  of $O_X$-modules
\[
\sigma_\mathcal{F}:\mathcal{F}\longrightarrow \sigma_X^*\overline{\mathcal{F}}
\]
 such that  the composition 
\[
\xymatrix@C=30pt{
\mathcal{F}   \ar[r]^-{\sigma_{\mathcal{F}}} &
\sigma_X^*\overline{\mathcal{F}}   \ar[r]^-{\sigma_X^*[ \overline{\sigma_{\mathcal{F}}}]} &
\sigma_X^*\Bigl [\overline{\sigma_X^*\overline{\mathcal{F}}}\Bigr ] \ar[r]^-\cong & \mathcal{F}} 
\]
is the identity morphism. 
\end{defn}
 
\begin{example}
If $\mathcal{F}$ is the sheaf of sections of a vector bundle $V$, then a real structure on $\mathcal F $ is the same thing as a real structure on the variety underlying $V$  that yields a morphism of vector bundles
\[
\xymatrix{
{V}\ar[d]_{\pi}\ar[r]^{\sigma_{{V}}}  &\overline{{V}}\ar[d]^{\overline{\pi}} \\
  X	\ar[r]_{\sigma_X}&\overline{X} .}
    \]
 \end{example}

\subsubsection{Real structures on algebraic families of Lie algebras and groups}

Let $X$ be a variety and let us specialize the above discussion to our algebraic familes. 

If $\boldsymbol{G}$ is an algebraic family  of groups over   $X$, then $\overline{\boldsymbol{G}}$ is a family of complex algebraic groups over the conjugate variety $\overline{X}$.

Similarly, if $\boldsymbol{\mathfrak{g}}$  is an algebraic family of complex Lie algebras over $X$ then the complex conjugate sheaf $\overline{\boldsymbol{\mathfrak{g}}}$  is an algebraic family of complex Lie algebras over $\overline{X}$.

Finally, if $({\boldsymbol{\mathfrak{g}}},{{\boldsymbol{K}}})$ is an algebraic family of Harish-Chandra pairs over $X$ then $({\overline{\boldsymbol{\mathfrak{g}}}},{\overline{\boldsymbol{K}}})$  is an algebraic family of Harish-Chandra pairs over $\overline{X}$.

 \begin{defn}
 We shall refer to the families above as the \emph{conjugate families} associated to $\boldsymbol G$, $\boldsymbol{\mathfrak g}$ and $(\boldsymbol{\mathfrak g}, \boldsymbol{K})$. A \emph{real structure} in each case is a morphism $\sigma$ for which the composition of $\sigma$ with $\overline{\sigma}$ is the identity.
\end{defn}

\subsubsection{Real families associated to real structures}

The \emph{set of real points} of a real structure on $X$ is the set of fixed points for $\sigma_X$ in the underlying topological space of $X$ (it may of course be empty).  

Given a real structure on an algebraic family of groups or Lie algebras over $X$, we obtain by restriction a family of complex groups or Lie algebras over the real points in $X$.  Each member of the family carries its own real structure, and by further passing to fixed sets we obtain families of Lie groups or real Lie algebras over the real points in $X$.  It is in this way that we shall recover families originally studied in the mathematical physics literature.

\section{Algebraic families of  classical groups}\label{section3}

 In this section we shall construct algebraic families of symmetric pairs of groups whose underlying families of Harish-Chandra pairs are the contraction families considered in Paragraphs~\ref{contraction_family} and \ref{CanonicalFamily}.  The construction will not apply to every case, but for instance it will apply to the examples
 \begin{enumerate}[\rm (a)]
 \item $\bigl(\,GL(q{+}p,\mathbb{C})\,,\,GL(q,\mathbb{C}){\times} GL(p,\mathbb{C})\,\bigr )$
 \item $\bigl(\,O(q{+}p,\mathbb{C})\,,\,O(q,\mathbb{C}){\times} O(p,\mathbb{C})\,\bigr )$
 \item $\bigl ( \, Sp(2n,\mathbb{C})\,,\,GL(n,\mathbb{C})\, \bigr )$
\end{enumerate}
as well as the determinant-one versions of (a) and (b) (we do not know how to handle the most general case). 

Thus we shall begin with a complex affine algebraic group $G$, together with an involution $\theta$ with fixed subgroup $K$, and construct (in the cases above) an algebraic family $\boldsymbol{G}$  of groups over $\mathbb{CP}^1$.   The fiber over all but two points in $\mathbb{CP}^1$ will be isomorphic to $G$, and the remaining two fibers will be isomorphic to the semidirect product $K\ltimes \mathfrak{p}$, where $\mathfrak{p}$ is the minus one eigenspace of $\theta$ as it acts on the Lie algebra of $G$ (we treat $\mathfrak p$ as an additive group, using its vector space structure).  The family $\boldsymbol{G}$ will carry an involution, and  the fixed-point subfamily   will be the  constant family  of groups with fiber $K$.

Our families will carry natural real structures that are compatible with the standard real structure on $\mathbb{CP}^1$.  We obtain a family of real groups over $\mathbb{RP}^1\subseteq \mathbb{CP}^1$.  For example in case (i) the family  has the form
\[
\boldsymbol{G}^{\sigma}|_{x}\cong \begin {cases} U(p,q) &x >0\\
  U(p){\times}U(q) \ltimes \mathfrak{p}^\sigma& x=0,\infty \\
 U(p{+}q) & x <0 ,
 \end{cases}
 \]
 where $\mathfrak{p}^\sigma$ is the minus one eigenspace of the Cartan involution of the real Lie algebra $\mathfrak{u}(p,q)$ with fixed subalgebra $\mathfrak{u}(p){\times}\mathfrak{u}(q)$, and $x$ is the usual coordinate on $\mathbb{RP}^1$.

When $p=q=1$ the determinant-one version of this family will give 
\[
\boldsymbol{G}^{\sigma}|_{x}\simeq\begin {cases} SU(1,1) & x>0\\
 U(1)\ltimes \mathfrak{p}^\sigma& x=0,\infty \\
 SU(2) & x <0 ,
 \end{cases}
 \]
which is the family that we mentioned in the introduction. 
 
 \subsection{The general construction}\label{sec33}
 
 We shall start with an algebraic group $G$ and an involution $\theta$ with fixed subgroup $K$.  We shall assume that $G$ is embedded in a larger complex affine algebraic  group $H$.  We shall obtain an algebraic family of groups by  first  conjugating $G$ inside of $H$ by inner automorphisms of $H$, so as to obtain a family of subgroups of $H$ that are all isomorphic to $G$, and then forming a closure so as to obtain the family that we want.

We are interested in families of groups that contain the \emph{constant} family $K$ as a subfamily, and so we shall consider only inner automorphisms that fix $K$ (in fact it will be sufficient for our purposes to consider only automorphisms that fix $K$ pointwise, and indeed only some of these automorphisms).

To form the closure, we shall work in the Grassmannian $\Gr_d (\mathfrak{h})$ of linear subspaces of $\mathfrak{h}$ of dimension $d = \dim(\mathfrak{g})$ where $\mathfrak{h}$ is the Lie algebra of $H$.  

Let  $Z\subseteq H$ be  a subvariety consisting of elements in $H$  that centralize $K$ (we emphasize that $Z$ need not be the full centralizer of $K$ in $H$, and it need not even be a subgroup).  Denote by 
\[
X_0 \subseteq \Gr_d (\mathfrak{h})
\]
the image inside the Grassmannian of the morphism 
\[
\alpha\colon Z \longrightarrow \Gr_d (\mathfrak{h})  
\]
defined by $\alpha (z) = \operatorname{Ad}_z[\mathfrak{g}]$, and denote by 
\[
X\subseteq \Gr_d (\mathfrak{h}) 
\]
 the  closure of $X_0 $ in $ \Gr_d (\mathfrak{h})$.  Consider now the restriction to $X$ of the tautological bundle $E$ over $\Gr_d(\mathfrak{h})$, as in the diagram:
\[
\xymatrix{
  \boldsymbol{\mathfrak{g}}\,\ar[d]  \ar@{^{(}->}[r] & E \, \ar@{^{(}->}[r] \ar[d] & \Gr_d(\mathfrak{h}){\times} \mathfrak{h}\ar[d]\\
  X\, \ar@{^{(}->}[r] & \Gr_d(\mathfrak{h})\, \ar@{=}[r]  &\Gr_d(\mathfrak{h})   .
  } 
  \]
The fibers of $\boldsymbol{\mathfrak{g}} $ are Lie subalgebras of $ \mathfrak {h}$, and we obtain in this way an algebraic family $\boldsymbol{\mathfrak {g}}$ of Lie algebras over the projective variety $X$.

As for algebraic families of  groups, if  $G$ is connected (or if $K$ meets every component of $G$), then  the subgroup $\operatorname{Ad}_z[G]\subseteq H$ depends only on the image of $z$ under the morphism $\alpha$, and so we obtain a collection of algebraic subgroups of $H$ parametrized by $X_0$.  It may be checked on a case by case basis for our examples that this is an algebraic family of groups, namely a subfamily of the constant family with fiber $H$, and that the closure of this family in $\Gr_{d}(\mathfrak{h})\times H$ is an algebraic family of groups $\boldsymbol{G}$ over $X$, whose family of Lie algebras is the family $\boldsymbol{\mathfrak{g}}$ above.  We shall summarize   the calculations in one case in the next section. 

Our construction of real structures will be as follows.  We shall begin with   a real structure $\sigma$ on $G$ that commutes with the involution $\theta$ of $G$.\footnote{It is worth recalling that  real structures on a reductive $G$, which  are the main groups of interest to us, are in correspondence, up to conjugation, with involutions on $G$, in such a way that the involution and real structure commute, and the composition determines a compact form of $G$; see \cite[Theorems 3\&4, pp.230--231]{OnishchikVinberg}.}    We shall assume that $\sigma$ extends to $H$ and also determines a real structure on $Z\subseteq H$.   Then all of the families described above acquire real structures from $\sigma$.

\subsection{Special cases}
 \label{sec-special-cases-of-group-families}
 Let us examine the case of the pair 
 \[
 G = GL(q{+}p,\mathbb{C})\quad \text{and} \quad K=GL(q,\mathbb{C}){\times} GL(p,\mathbb{C}).
 \]
   We shall embed $G$ diagonally in the product $H = G {\times} G$, and  we shall take  $Z\subseteq H$ to be the set of all pairs $z= (g,g^{-1})$, where  
   \[
   g = \begin{pmatrix} \mu I_q & 0 \\ 0 & \nu I_p\end{pmatrix}   ,
   \]
  and where $\mu,\nu\in \mathbb{C}^{\times}$.    The Lie subalgebra
$
  \operatorname{Ad}_{z}[ \mathfrak{g}] \subseteq   \mathfrak{h}
$
decomposes as 
\[
  \operatorname{Ad}_{z}[ \mathfrak{g}]  = \mathfrak{k} \oplus \mathfrak{p}_z,
  \]
   where $\mathfrak{p}_z$ consists of all pairs of matrices of the form
   \[
 \Bigl (
 \begin{bmatrix} 
 0 & \mu^2B \\
 \nu^{2} C & 0
 \end{bmatrix} 
 \, , \, 
  \begin{bmatrix} 
 0 &\nu^{2}  B \\
\mu^2 C & 0
 \end{bmatrix}   
 \Bigr ) .
\]
This  only depends on $\mu^2\nu^{-2} \in \mathbb{C}^{\times}$, and the variety $X_0\subseteq \Gr_d(\mathfrak{h})$ from the previous section is isomorphic to $\mathbb{C}^\times$ in this way.  

The closure of $X_0$ is isomorphic to $\mathbb{CP}^1$; the points in $\mathbb{CP}^1$ with homogeneous coordinates $[0,1]$  and $[1,0]$ correspond to the spaces  $\mathfrak{p}_0$ and $\mathfrak{p}_\infty$ of matrix pairs of the types
\[
 \Bigl (
 \begin{bmatrix} 
 0 & 0 \\
 C & 0
 \end{bmatrix} 
 \, , \, 
  \begin{bmatrix} 
 0 & B \\
   0 & 0
 \end{bmatrix}   
 \Bigr )
 \quad
 \text{and}
 \quad 
  \Bigl (
 \begin{bmatrix} 
 0 &  B \\
0 & 0
 \end{bmatrix} 
 \, , \, 
  \begin{bmatrix} 
 0 & 0 \\
  C & 0
 \end{bmatrix}   
 \Bigr )
 ,
 \]
 respectively.   We obtain an algebraic family of Lie algebras over $X= \mathbb{CP}^1$ that is isomorphic  over $\mathbb C\subseteq \mathbb{CP}^1$ to the contraction family from Paragraph~\ref{contraction_family} associated to the involution 
 \[
 \theta = \operatorname{Ad}_{\left(\begin{smallmatrix} 
I & 0 \\ 0 & -I
 \end{smallmatrix} \right )}
\colon \mathfrak{g} \longrightarrow \mathfrak{g} .
\]

\begin{remark}
In Paragraph~\ref{contraction_family} we defined the contraction family over the affine line rather than over the projective line.  The computation above shows that the family can be extended to the projective line in at least the special case considered there,  but it is a simple matter to do so in general.
 First, the formula
\[
[\eta, \zeta](w) =  \begin{cases}
\frac 1w [\eta(w), \zeta(w)]  & \text{if $\zeta$  and $\eta$ are sections of $ O_X\! \otimes \!\mathfrak{p}$}   \\
\phantom{i} [\eta(w), \zeta(w)]  & \text{otherwise}.
\end{cases}
\]
defines an algebraic family over the complement of $0$ in $\mathbb{CP} ^1$.   Second, the formula 
\[
\eta  \longmapsto 
\begin{cases}
z \cdot \eta  & \text{if   $\eta $ is a section of $ O_X\! \otimes \!\mathfrak{p}$}   \\
\phantom{i} \eta  & \text{if   $\eta $ is a section of $ O_X\! \otimes \!\mathfrak{k}$}
\end{cases}
\]
defines an isomorphism from the family over $\mathbb{CP} ^1 \setminus \{ \infty\}$ to the family over   $\mathbb{CP} ^1 \setminus \{ 0\}$,  when both are restricted to $\mathbb{CP} ^1 \setminus \{ 0,\infty\}$.  We can use this isomorphism to glue the two families (sheaves) together.  \end{remark}

Continuing with the special case, the closure of the family of groups $G_z$ over $X_0$ is the algebraic family of groups over $X= \mathbb{CP}^1$, a subfamily of the constant  family with  fiber $H$, whose fibers over $0$ and $\infty$ are the  subgroups 
\[
G_{0,\infty} = \{\, k + X : k \in K \,\,\text{and}\,\, X \in \mathfrak{p}_{0,\infty}\,\}
\]
 (recall that $K$ is diagonally embedded in $H=G{\times}G$).  As algebraic groups, these are isomorphic to the semidirect products 
$
 K \ltimes  \mathfrak{p}_{0,\infty}$.

 The antiholomorphic involution  of $G$ defined by 
\[
\sigma (g^*) = \left [ \begin{smallmatrix} I & 0 \\ 0 & -I \end{smallmatrix} \right ] g^{-1} \left [ \begin{smallmatrix} I & 0 \\ 0 & -I \end{smallmatrix} \right ] 
\]
(where $g^*$ is the conjugate-transpose) commutes with $\theta$ and the corresponding group of real points is $U(p,q)$. If we extend $\sigma$ to   $H=G{\times} G$ by the formula 
\[
\sigma \colon (g_1,g_2) \longmapsto (\sigma(g_2), \sigma(g_1))
\]
then $\sigma$ maps $Z\subseteq H$  to its conjugate, determines the standard real structure on  $X=\mathbb{CP}^1$, for which the antiholomorphic involution is complex conjugation on homogeneous coordinates,  and gives real structures on the groups $\boldsymbol{G}\vert_x$, with $x\in \mathbb {RP}^1$, exactly as described at the beginning of this section.

\begin{remark}
All of the constructions pass to the determinant-one subgroups of $G$ and $H$, and in the particular case $p=q=1$ we obtain the family of real groups
\[
\boldsymbol{G}^{\sigma}|_{x}\cong\begin {cases} SU(1,1) & x>0\\
 U(1)\ltimes \mathfrak{p}^\sigma& x=0,\infty \\
 SU(2) & x <0 ,
 \end{cases}
 \]
  over $\mathbb{RP}^1$ we were seeking.
\end{remark}

\section{A classification problem}\label{section4}

In this final section of the paper  we shall study a classification problem in order to explore a little further the concept of algebraic family of Harish-Chandra modules in a simple, concrete case. 

The setup is as follows. We shall study one particular algebraic family of Harish-Chandra pairs, namely the family $(\boldsymbol{\mathfrak{g}},{\boldsymbol{K}})$ from Section~\ref{sec-special-cases-of-group-families}, with $p=q=1$. In this case the fibers of $\boldsymbol{\mathfrak{g}}$ are generically isomorphic to $\mathfrak{sl}(2,\mathbb{C})$ and  $\boldsymbol{K}$ is the constant family of groups with fiber $\mathbb{C}^{\times}$ (embedded as the diagonal subgroup in $SL(2,\mathbb C)$). The sheaf of Lie algebras $\boldsymbol{\mathfrak{g}}$ decomposes as a direct sum of invertible sheaves 
\[
\boldsymbol{\mathfrak{g}}= \boldsymbol{\mathfrak{g}}_{2}\oplus \boldsymbol{\mathfrak{g}}_{0}\oplus \boldsymbol{\mathfrak{g}}_{-2}
\]
according to the action of $\boldsymbol{K}$, and we shall analyze algebraic families of Harish-Chandra modules by breaking down the $\boldsymbol{\mathfrak{g}}$-action into three parts according to this decomposition. The family of Lie algebras associated to $\boldsymbol{K}$ coincides with  $\boldsymbol{\mathfrak{g}}_0$ and it is isomorphic to the constant family  $O_X\otimes_{\mathbb{C}} \operatorname{Lie}(\mathbb{C}^{\times})$. 

An algebraic family of Harish-Chandra modules $\mathcal{F}$ has a $\boldsymbol{K}$-isotypical decomposition
  \[
 \mathcal{F} = \bigoplus_{n\in \mathbb Z}  \mathcal{F}_n .
\]
 The action of $\boldsymbol{K}$ on $\mathcal{F}$ is of course given by the isotypical decomposition, and  since $\boldsymbol{\mathfrak{g}}_0 = \operatorname{Lie}(\boldsymbol{K})$, the action of 
 $\boldsymbol{\mathfrak{g}}_0$ is determined too.
The full action of $\boldsymbol{\mathfrak{g}}$  determines, and is determined by, morphisms of sheaves
\[
\boldsymbol{\mathfrak{g}}_{\pm 2}\otimes _{O_X} \mathcal{F}_n \longrightarrow  \mathcal{F}_{n\pm 2}.
\]
They satisfy     compatibility conditions that are related in part to the fact that $\boldsymbol{\mathfrak{g}}_0$ is the Lie algebra of $\boldsymbol{K}$.  For the generically irreducible families $\mathcal F$ that we shall study,  each non\-zero  $\mathcal{F}_n$ is in fact an invertible sheaf, and the degrees of these sheaves further constrain the morphisms above.

  The main lessons learned from our investigation will be as follows:

\begin{enumerate}[\rm (a)]

\item In the context of families it is   appropriate to classify \emph{generically irreducible} families of Harish-Chandra modules, defined below, rather than  irreducible families.

  \item  There are invariants for the classification problem, namely $K$-types, and infinitesimal characters (in our case the infinitesimal characters  will be associated to certain elements of the ring $\mathbb{C}[z,z^{-1}]$), that are similar to those in the standard classification problem for individual Harish-Chandra modules.
  
  \item There are also  additional  geometric invariants, namely the degrees of the invertible sheaves $\mathcal F_n$.
  
\item  These invariants together are  enough to solve  some cases of the  classification problem, but they are not enough in general.  In fact the general problem is unwieldy without the imposition of further hypotheses beyond generic irreducibility. \end{enumerate}
The classification results we obtain will be used in the sequel to this paper when we consider representations of real groups and the contraction families of representations from mathematical physics.

\subsection{Generically irreducible and quasi-simple families}\label{sec401}
\interfootnotelinepenalty=10000
\subsubsection{Generically irreducible modules}
\label{par-generically-irreducible}
Let $(\boldsymbol{\mathfrak{g}},{\boldsymbol{K}})$ be an algebraic family of Harish-Chandra pairs    over a variety $X$.  Informally speaking,    an algebraic family of Harish-Chandra modules $\mathcal{F}$ for $(\boldsymbol{\mathfrak{g}},{\boldsymbol{K}})$   is \emph{generically irreducible} if for almost any $x\in X$ the fiber $\mathcal{F}|_x$ is an irreducible  $(\boldsymbol{\mathfrak{g}}|_x,\boldsymbol{{K}}|_x)$-module.\footnote{Formally, an algebraic family of Harish-Chandra modules $\mathcal{F}$ over $(\boldsymbol{\mathfrak{g}},{\boldsymbol{K}})$   is \emph{generically irreducible}  if    $\Bbbk \otimes_{O_{X}}  \mathcal F $ is   irreducible as a module  for the Lie algebra  $\Bbbk \otimes_{O_{X}}{\boldsymbol{\mathfrak{g}}}$, where $\Bbbk$ is the algebraic closure of the   field of rational functions on $X$.}   For the family of Harish-Chandra pairs that we will work with in this section, generic irreducibility means that  all except at most countably many fibers are irreducible.

\subsubsection{Quasi-simple families}
Let $\boldsymbol{\mathfrak{g}}$ be an algebraic family of Lie algebras  over $X$.  The  \emph{sheaf of universal enveloping algebras}   $\mathcal{U}(\boldsymbol{\mathfrak{g}})$ is  the sheaf of $O_X$-algebras characterized by    the usual universal property: it is equipped with a morphism 
\[
\boldsymbol{\mathfrak{g}} \longrightarrow \mathcal{U}(\boldsymbol{\mathfrak{g}})
\]
that is compatible with Lie  and commutator brackets, and  is initial among such morphisms.   

In the context of algebraic families, the   {sheaf of universal enveloping algebras} is locally free as a sheaf of $O_X$-modules (in fact there is a Poincar\'e--Birkhoff--Witt isomorphism from the sheaf of symmetric algebras   $S(\boldsymbol{\mathfrak{g}})$ to  the sheaf of enveloping algebras).

\begin{defn}
Let  $\mathcal{Z}(\boldsymbol{\mathfrak{g}}) $ be the center of  $\mathcal{U}(\boldsymbol{\mathfrak{g}}) $.
A $(\boldsymbol{\mathfrak{g}},{\boldsymbol{K}})$-module $\mathcal{F}$ is \emph{quasisimple}  if   the morphism
\[
\mathcal{U}(\boldsymbol{\mathfrak{g}}) \longrightarrow \operatorname{End}(\mathcal{F})
\]
maps $\mathcal{Z}(\boldsymbol{\mathfrak{g}})$ into the subsheaf $O_X\cdot I_{\mathcal{F}}\subseteq  \operatorname{End}(\mathcal{F})$. 
\end{defn}

This means that a family is quasisimple if  every fiber has an infinitesimal character. The usual Schur's Lemma argument implies:

\begin{lemma} A generically irreducible quasi-admissible $(\boldsymbol{\mathfrak{g}},{\boldsymbol{K}})$-module is quasisimple. \qed
\end{lemma}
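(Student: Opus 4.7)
My plan is to follow a family version of Schur's lemma: apply it fiberwise at generic points to obtain a pointwise scalar action of each central element, then use quasi-admissibility together with reducedness of $O_X$ to promote the scalars to a regular function. Let $z$ be a local section of $\mathcal{Z}(\boldsymbol{\mathfrak{g}})$ over an open $U\subseteq X$; centrality makes the induced $O_U$-linear endomorphism $\phi_z$ of $\mathcal{F}|_U$ commute with the $\boldsymbol{\mathfrak{g}}|_U$-action, and since the adjoint action of $\boldsymbol{K}$ on $\mathcal{U}(\boldsymbol{\mathfrak{g}})$ is by algebra automorphisms whose differential $\operatorname{ad}$ annihilates the center, $\mathcal{Z}(\boldsymbol{\mathfrak{g}})$ is fixed elementwise by the identity component of $\boldsymbol{K}$, so $\phi_z$ also commutes with the $\boldsymbol{K}$-action in the reductive setting considered.

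At every $x$ on the dense open subset of $U$ where $\mathcal{F}|_x$ is irreducible, the fiber decomposes by quasi-admissibility into a countable sum of finite-dimensional $\boldsymbol{K}|_x$-isotypical pieces, so $\dim_{\mathbb{C}}\mathcal{F}|_x$ is at most countable and in particular strictly less than $|\mathbb{C}|$. Dixmier's form of Schur's lemma then gives $\phi_z|_x = \lambda(x)\cdot\operatorname{id}$ for a unique $\lambda(x)\in\mathbb{C}$. To upgrade $\lambda$ to a regular function, I would pick any nonzero $\boldsymbol{K}$-isotypical component $\mathcal{F}_\tau$ of $\mathcal{F}|_U$; it is locally free of finite rank by quasi-admissibility, and $\phi_z$ preserves it thanks to the equivariance just discussed. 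After shrinking $U$ so that $\mathcal{F}_\tau|_U\cong O_U^{\,r}$, the matrix representing $\phi_z|_{\mathcal{F}_\tau|_U}$ has entries in $O_U$ and equals $\lambda(x)\cdot I$ on a dense open subset; reducedness of $O_X$ forces the off-diagonal entries and the pairwise differences of diagonal entries to vanish identically, giving $\phi_z|_{\mathcal{F}_\tau|_U} = f\cdot\operatorname{id}$ for a unique $f\in O_U$. The same $f$ implements $\phi_z$ on every other isotypical summand, because both summands produce the same scalar at each generic irreducible fiber.

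The main obstacle I anticipate is not the algebraic-geometric step, which only records that a section of the endomorphism bundle of a locally free sheaf over a reduced irreducible variety is determined by its values on any dense open, but rather the verification of full $\boldsymbol{K}$-equivariance of $\phi_z$ when the fibers of $\boldsymbol{K}$ have components beyond the identity component. For connected $\boldsymbol{K}$ this is immediate from the infinitesimal computation above; in the disconnected case one may either average central elements over $\boldsymbol{K}$ using reductivity, or restrict attention to $\mathcal{Z}(\boldsymbol{\mathfrak{g}})^{\boldsymbol{K}}$, whose image in $\operatorname{End}(\mathcal{F})$ is already what governs infinitesimal characters.
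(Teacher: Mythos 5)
Your argument is correct and is essentially the paper's: the paper gives no details beyond the phrase ``the usual Schur's Lemma argument,'' and your fiberwise application of Dixmier's Schur lemma (using quasi-admissibility for countable fiber dimension) followed by spreading the scalar out over the reduced, irreducible base is exactly the intended argument. Your care about $\boldsymbol{K}$-equivariance of the central action is a reasonable extra precaution, and is harmless in the connected case the paper actually uses.
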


\subsection{The contraction family   for  SL(2)}

Throughout the rest of Section~\ref{section4} we shall  focus our attention on a specific  family $(\boldsymbol{\mathfrak{g}},\boldsymbol{K})$ of Harish-Chandra pairs over $X= \mathbb{C}\mathbb{P}^1$, namely the family associated to the determinant one case of  the family of groups constructed in Section~\ref{sec-special-cases-of-group-families}, with $p=q=1$. 

To be explicit, $\boldsymbol{\mathfrak{g}}$ is the following subfamily of the constant family of Lie algebras over $\mathbb{C}\mathbb{P}^1$ with fiber   $\mathfrak{sl}(2,\mathbb{C})\times \mathfrak{sl}(2,\mathbb{C})$:
\[
 \left \{ \Bigl( \begin{bmatrix}
a & \alpha b\\
\beta c & -a
\end{bmatrix} ,
\begin{bmatrix}
a & \beta b \\
\alpha c & -a
\end{bmatrix} , [\alpha:\beta] \Bigr) 
	\in \mathfrak{sl}(2,\mathbb C){\times} \mathfrak{sl}(2,\mathbb C){\times}\mathbb{CP}^1\, \right \}.
\]
It can be checked that   
\[
\boldsymbol{\mathfrak {g}}|_{z}\cong\begin {cases} \mathfrak{sl}(2,\mathbb{C}) & z\neq 0,\infty \\
 \mathbb{C}^{\times}\ltimes \mathbb{C}^2& z=0,\infty ,
 \end{cases}
 \]
where $z=\alpha/\beta$. The family  $\boldsymbol{K}$ is the constant family of groups  with fiber
$
K =  \Delta H=\{\,{(h,h)}|h\in H\,\} 
$
 where 
 \[ 
H = \left\{   \begin{bmatrix}
a & 0\\
0 & a^{-1}
\end{bmatrix}  \in SL (2,\mathbb{C})    \right\}  .
\]
Our aim is to  classify the  equivalence classes of generically irreducible and  quasi-admissible $(\boldsymbol{\mathfrak{g}},\boldsymbol{K})$-modules in this case.  

Restricted to the affine line $\beta \ne 0$,  this is the contraction family discussed in Paragraph~\ref{contraction_family}. We could have chosen to work with the deformation family from Paragraph~\ref{deformation_family}, extended to a family over $\mathbb {CP}^1$.  The outcome would be essentially the same.

\subsection{Isotypical decomposition of $\boldsymbol{\mathfrak{g}}$}
\label{sec-isotypical-decomp-g}

In the following lemma we shall write $k_a$ for the diagonal element $(h_a,h_a)$, where 
\[
h_a =  \begin{bmatrix}
a & 0\\
0 & a^{-1}
\end{bmatrix} .
\]

 \begin{lemma}
The sheaf $\boldsymbol{\mathfrak{g}}$ decomposes into a direct sum
\[
\boldsymbol{\mathfrak{g}}= \boldsymbol{\mathfrak{g}}_{2}\oplus \boldsymbol{\mathfrak{g}}_{0}\oplus \boldsymbol{\mathfrak{g}}_{-2}
\]
under the action of $ \boldsymbol{K}$, in such  a way that
\[
\operatorname{Ad}_{ k_a}\cdot \sigma_n= a^n\cdot \sigma_n 
 \]
  for any section $\sigma_n$ of $\boldsymbol{\mathfrak{g}}_n$.  Each of the sheaves  $\boldsymbol{\mathfrak{g}}_n$ is   invertible,  with  
  \[
 \deg(\boldsymbol{\mathfrak{g}}_{ 2}) = -1, \quad  \deg(\boldsymbol{\mathfrak{g}}_0) = 0 \quad \text{and} \quad \deg(\boldsymbol{\mathfrak{g}}_{ -2}) = -1.
  \] 
\end{lemma}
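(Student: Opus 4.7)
The plan is to proceed in three short steps: first verify the weight decomposition by a direct computation of $\operatorname{Ad}_{k_a}$, then identify each summand with a familiar line bundle on $\mathbb{CP}^1$, and finally read off the degrees.

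First, I would write out a generic section of $\boldsymbol{\mathfrak{g}}$ as
\[
\Bigl(\, a(E_{11}-E_{22}),\ a(E_{11}-E_{22})\,\Bigr) + \Bigl(\alpha b E_{12},\ \beta b E_{12}\Bigr) + \Bigl(\beta c E_{21},\ \alpha c E_{21}\Bigr),
\]
and apply $\operatorname{Ad}_{k_a}$. Since $\operatorname{Ad}_{h_a}$ fixes $E_{11}-E_{22}$, scales $E_{12}$ by $a^2$ and $E_{21}$ by $a^{-2}$, the three summands are exactly the weight $0$, $+2$ and $-2$ eigenspaces of $\operatorname{Ad}_{k_a}$. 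This gives the isotypical decomposition, and the summands are manifestly rank-one at every point of $\mathbb{CP}^1$ (the spanning vectors of $\boldsymbol{\mathfrak{g}}_{\pm 2}$ never vanish because $[\alpha:\beta]$ is a point of $\mathbb{CP}^1$), hence each $\boldsymbol{\mathfrak{g}}_n$ is an invertible sheaf.

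For the degrees, I would argue as follows. The summand $\boldsymbol{\mathfrak{g}}_0$ carries the global nowhere-vanishing section $(E_{11}-E_{22},E_{11}-E_{22})$, so it is isomorphic to $O_X$ and $\deg(\boldsymbol{\mathfrak{g}}_0)=0$. For $\boldsymbol{\mathfrak{g}}_2$, the fiber over $[\alpha:\beta]$ is the line $\mathbb{C}\cdot(\alpha E_{12},\beta E_{12})$, which under the embedding into the trivial rank-two bundle $O_{\mathbb{CP}^1}\otimes(\mathbb{C} E_{12}\oplus \mathbb{C} E_{12})$ is exactly the tautological line $\mathbb{C}\cdot(\alpha,\beta)\subseteq \mathbb{C}^2$; thus $\boldsymbol{\mathfrak{g}}_2 \cong O_{\mathbb{CP}^1}(-1)$. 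The same identification for $\boldsymbol{\mathfrak{g}}_{-2}$ works after exchanging the two coordinates of the ambient $\mathbb{C}^2$, an automorphism of $\mathbb{CP}^1$ that preserves degree, giving $\boldsymbol{\mathfrak{g}}_{-2}\cong O_{\mathbb{CP}^1}(-1)$ as well.

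As a sanity check, one can redo the degree computation via transition functions on $U_0=\{\beta\ne 0\}$ and $U_\infty=\{\alpha\ne 0\}$: the natural local frames for $\boldsymbol{\mathfrak{g}}_2$ are $e_0=(zE_{12},E_{12})$ on $U_0$ with $z=\alpha/\beta$, and $e_\infty=(E_{12},wE_{12})$ on $U_\infty$ with $w=\beta/\alpha=1/z$, and the relation $e_0 = z\cdot e_\infty$ on the overlap is the transition cocycle for $O(-1)$. There is no real obstacle here; the only mild subtlety is the bookkeeping between the two standard affine charts and the tautological line bundle convention, and this is handled by a single transition-function calculation.
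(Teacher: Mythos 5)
Your proof is correct and follows essentially the same route as the paper: your explicit weight-homogeneous frames $e_0=(zE_{12},E_{12})$ and $(E_{11}-E_{22},E_{11}-E_{22})$ are exactly the paper's rational sections $X$ and $H$, and your transition-cocycle computation is the same calculation the paper performs by observing that $X$ and $Y$ are nowhere vanishing with a single simple pole, hence generate invertible sheaves of degree $-1$. Your packaging of $\boldsymbol{\mathfrak{g}}_{\pm 2}$ as the tautological bundle $O(-1)$ inside the trivial rank-two bundle is a nice touch, fully consistent with the paper's Section~3 construction of $\boldsymbol{\mathfrak{g}}$ as the restriction of the tautological bundle over a Grassmannian.
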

\begin{proof} For the existence of the weight decomposition see Paragraph~\ref{sec-reductive-families-and-admissibility}.
The formulas 
\begin{align*}
X \colon [\alpha: \beta] &\longmapsto \Bigl( \begin{bmatrix}
0 & \alpha/\beta \\
0 & 0
\end{bmatrix} ,
\begin{bmatrix}
0 & 1 \\
0 & 0
\end{bmatrix} \Bigr)
\\
Y \colon [\alpha: \beta] & \longmapsto \Bigl( \begin{bmatrix}
0 & 0 \\
\beta/\alpha    & 0
\end{bmatrix} ,
\begin{bmatrix}
0 &  0 \\
1  & 0
\end{bmatrix} \Bigr)
\end{align*}
define nowhere vanishing  rational  sections of $\boldsymbol{\mathfrak{g}}_{2}$ and $\boldsymbol{\mathfrak{g}}_{-2}$  respectively, $X$ has a  simple poles at $\beta=0$, $Y$ has a  simple poles at $\alpha=0$, while the formula 
\[
H \colon [\alpha: \beta]   \longmapsto \Bigl( \begin{bmatrix}
1& 0\\
0& -1
\end{bmatrix} ,
\begin{bmatrix}
1 & 0\\
0 & -1
\end{bmatrix} \Bigr)
\]
defines a nonwhere vanishing regular section of  $ \boldsymbol{\mathfrak{g}}_{0}$.  These sections determine the degrees.
\end{proof}

\subsection{Invariants}
\label{sec-parameters}

We shall start by attaching  the usual invariants, more or less, to generically irreducible algebraic families of Harish-Chandra modules, namely an invariant associated to the \emph{Casimir subsheaf}  of $\mathcal{Z}(\boldsymbol{\mathfrak{g}})$, which we shall define below,   and an invariant  consisting of the set of the $K$-types appearing in the isotypical decomposition of a module.  

\subsubsection{The Casimir sheaf}
\label{sec-casimir-section}
The sheaf $\mathcal{U}(\boldsymbol{\mathfrak{g}})$ is filtered by the usual notion of order in enveloping algebras. This filtration descends to the  center $\mathcal{Z}(\boldsymbol{\mathfrak{g}})$. 

\begin{defn}
The  \emph{Casimir sheaf} $\mathcal {C}\subseteq \mathcal{Z}(\boldsymbol{\mathfrak{g}})$  is the subsheaf of the order $2$ part of 
$
\mathcal{Z}(\boldsymbol{\mathfrak{g}})  $   consisting of sections that act trivially on the  family of trivial modules.
\end{defn}

The Casimir sheaf is an invertible sheaf, isomorphic to $O(-2)$ (this will be clear from the explicit construction of rational sections in the next paragraph).  By Schur's lemma its action on a generically irreducible algebraic family of Harish-Chandra modules is determined by a morphism of $O_X$-modules 
\[
\mathcal C \longrightarrow O_X.
\]
This is our first invariant.

\subsubsection{Sections of the Casimir sheaf}
\label{sec-canonical-casimir-section}

The sections $X$, $H$ and $Y$ in the proof of the lemma in Section~\ref{sec-isotypical-decomp-g}  form an $\mathfrak{sl}(2)$ triplet
\[
[H,X] = 2 X , \quad [H,Y] = -2 Y \quad \text{and } \quad [X,Y] =  H
\]
(but since $X,Y$ and $H$ are only \emph{rational} sections, the relations do not  of course imply that $\boldsymbol{\mathfrak{g}}$ is isomorphic to a constant family).  
 The formula 
\[
C= H^2 + 2XY + 2YX = H^2 + 2H + 4YX
\]
defines a  rational section of the Casimir sheaf, which we shall call   the  \emph{ Casimir section}.  From the formula it is evident that 
\[
\operatorname{ord}_{0} (C) = -1\quad \text{and} \quad  \operatorname{ord}_{\infty} (C) = -1,
\]  while $\operatorname{ord}_{z} (C) = 0$ elsewhere. Hence:

\begin{lemma}\label{lem12}
Let $\mathcal{F}$ be a generically irreducible  quasi-admissible $(\boldsymbol{\mathfrak{g}},\boldsymbol{K})$-module. The  Casimir section, $C$, acts on $\mathcal F$  by multiplication by a rational  function on $X$ of the form
\[ 
\pushQED{\qed}
c_1z +c_0 +c_{-1}z^{-1}. 
\qedhere
\popQED
\]
\end{lemma}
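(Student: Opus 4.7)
The plan is to reduce the action of $C$ on $\mathcal{F}$ to a morphism of invertible sheaves on $\mathbb{CP}^1$, and then exploit the elementary geometry of line bundles on the projective line.

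First, the preceding lemma ensures that $\mathcal{F}$ is quasisimple, so the action of $\mathcal{Z}(\boldsymbol{\mathfrak{g}})$ on $\mathcal{F}$ factors through a morphism of sheaves of $O_X$-algebras $\mathcal{Z}(\boldsymbol{\mathfrak{g}}) \longrightarrow O_X$. Restricting to the Casimir subsheaf yields a morphism of invertible $O_X$-modules $\chi \colon \mathcal{C} \to O_X$. By construction, the Casimir section $C$ acts on $\mathcal{F}$ as multiplication by the rational function $\chi(C)$ (defined wherever $C$ is regular).

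Second, by Paragraph \ref{sec-canonical-casimir-section}, $\mathcal{C} \cong O(-2)$ on $X = \mathbb{CP}^1$, so the space of morphisms $\mathcal{C} \to O_X$ is canonically identified with $H^0(\mathbb{CP}^1, O(2))$, a three-dimensional vector space. If $\chi = 0$ then $C$ acts as $0$, which trivially fits the claimed form. Otherwise $\chi$ is injective, and its cokernel is the structure sheaf of an effective divisor $D$ on $\mathbb{CP}^1$ of degree $\deg O(2) = 2$.

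Third, the order computations preceding the lemma give $\operatorname{div}(C) = -[0] - [\infty]$ as a Weil divisor on $\mathbb{CP}^1$, and therefore
\[
\operatorname{div}(\chi(C)) = \operatorname{div}(C) + D \geq -[0] - [\infty].
\]
The space of rational functions on $\mathbb{CP}^1$ satisfying this divisor bound is three-dimensional; in the affine coordinate $z = \alpha/\beta$ it is spanned by $z^{-1}$, $1$ and $z$. Hence $\chi(C) = c_1 z + c_0 + c_{-1} z^{-1}$ for some complex constants, as required. The only substantive point is the identification of nonzero morphisms $O(-2) \to O$ on $\mathbb{CP}^1$ with effective divisors of degree $2$; this is entirely standard and presents no real obstacle.
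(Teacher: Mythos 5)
Your proposal is correct and follows essentially the same route as the paper: quasisimplicity (via Schur's lemma) reduces the Casimir action to a morphism $\mathcal C \to O_X$, and the order computation $\ord_0(C)=\ord_\infty(C)=-1$ with $C$ regular and nonvanishing elsewhere forces the image to be a rational function with at worst simple poles at $0$ and $\infty$, i.e.\ of the form $c_1z+c_0+c_{-1}z^{-1}$. The only difference is that you spell out the divisor bookkeeping (identifying the cokernel of $\chi$ with an effective degree-$2$ divisor) that the paper leaves implicit.
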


%

  \subsubsection{Weights of generically irreducible modules}

 In this section we shall study the restriction to $\boldsymbol{K}$ of a generically irreducible and quasi-admissible algebraic family of Harish-Chandra modules  for $(\boldsymbol{\mathfrak{g}},\boldsymbol{K})$.

Recall that the algebraic family of groups $\boldsymbol{K}$ in our algebraic family of  Harish-Chandra pairs  $(\boldsymbol{\mathfrak{g}},\boldsymbol{K})$ is the constant family with fiber the diagonal subgroup $K=\Delta H$  in $H{\times} H$, where  $H$ is the group of diagonal matrices in $SL(2,\mathbb C)$.  The irreducible representations of $K$ are the one-dimensional weights 
\[
k_a \longmapsto  a^n 
\]
 for $n\in \mathbb Z$.   We shall use the word ``weight'' rather than ``$K$-type'' in what follows, and if $\mathcal{F}$ is an algebraic family of $(\boldsymbol{\mathfrak{g}},\boldsymbol{K})$-modules, then we shall refer to the $K$-isotypical   decomposition 
 \[
 \mathcal{F} = \bigoplus_{n\in \mathbb Z}  \mathcal{F}_n
 \]
of   $\mathcal F$, as in Paragraph~\ref{sec-reductive-families-and-admissibility},  as its  \emph{weight decomposition}.

Since a quasi-admissible family $\mathcal F$ is generically irreducible if and only if the fiber $\mathcal{F}|_x$ is an irreducible  $(\boldsymbol{\mathfrak{g}}|_x,\boldsymbol{{K}}|_x)$-module for all but countably many points $x$, there are (infinitely many) fibers $\mathcal F \vert _x$ which are  irreducible as    $(\boldsymbol{\mathfrak{g}}\vert _x ,\boldsymbol{K}\vert _x)$-modules.  These have    the same weights  as $\mathcal{F}$ itself, and hence:

\begin{lemma}\label{lem4.3.3}
 The list of weights with multiplicities, of a generically irreducible quasi-admissible $(\boldsymbol{\mathfrak{g}},\boldsymbol{K})$-module coincides  with the list of weights, with multiplicities  of some irreducible admissible $(\mathfrak{sl}(2,\mathbb{C}),H )$-module.  
 \qed
 \end{lemma}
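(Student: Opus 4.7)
The plan is to exhibit a single fiber $\mathcal F|_x$ that realizes the weight list of $\mathcal F$ in the classical setting, and then use generic irreducibility at that fiber. First I would use quasi\nobreakdash-admissibility together with the fact that $\boldsymbol K$ is the \emph{constant} family with reductive fiber $H$ to obtain the isotypical decomposition
\[
\mathcal F \;=\; \bigoplus_{n\in\mathbb Z} \mathcal F_n,
\]
in which each $\mathcal F_n$ is a coherent $O_X$\nobreakdash-module. Since $\mathcal F$ is flat and each $\mathcal F_n$ is a direct summand of a flat module, $\mathcal F_n$ is also flat, hence (being coherent) locally free of finite rank, by the result from Hartshorne recalled in the excerpt. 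In particular the rank $r_n \in \mathbb Z_{\ge 0}$ of $\mathcal F_n$ is constant on $X = \mathbb{CP}^1$.

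Next I would pass to fibers. For every $x \in X$ the $\boldsymbol K$\nobreakdash-isotypical decomposition specializes to
\[
\mathcal F|_x \;=\; \bigoplus_{n\in\mathbb Z}\mathcal F_n|_x,
\]
with $\dim_{\mathbb C}\mathcal F_n|_x = r_n$; thus the multiset of weights of $\mathcal F|_x$ as a $K$\nobreakdash-module is independent of $x$ and coincides with the weight multiset $\{(n,r_n)\}$ of $\mathcal F$ itself. Now I would choose a suitable $x_0 \in X$: since by the description of $\boldsymbol{\mathfrak g}$ recalled in Section~\ref{sec-isotypical-decomp-g} the locus $\{0,\infty\}$ is the only place where the fiber of $\boldsymbol{\mathfrak g}$ fails to be $\mathfrak{sl}(2,\mathbb C)$, and since generic irreducibility means that $\mathcal F|_x$ is irreducible off an at most countable subset of $X$, I can pick $x_0 \in X \setminus \{0,\infty\}$ at which $\mathcal F|_{x_0}$ is irreducible as a $(\boldsymbol{\mathfrak g}|_{x_0},\boldsymbol K|_{x_0})$\nobreakdash-module.

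At this $x_0$ the pair $(\boldsymbol{\mathfrak g}|_{x_0},\boldsymbol K|_{x_0})$ is identified with $(\mathfrak{sl}(2,\mathbb C),H)$, and $\mathcal F|_{x_0}$ becomes a module for this pair whose weight decomposition has multiplicities $\{r_n\}$. It is irreducible by the choice of $x_0$, and it is admissible since each weight multiplicity $r_n$ is finite. Therefore $\mathcal F|_{x_0}$ is an irreducible admissible $(\mathfrak{sl}(2,\mathbb C),H)$\nobreakdash-module with the same list of weights (with multiplicities) as $\mathcal F$, as claimed. The only point requiring care is the existence of $x_0$, i.e.\ that the countable locus of reducibility does not exhaust $X \setminus\{0,\infty\}$; this is automatic because $\mathbb{CP}^1\setminus\{0,\infty\}$ is uncountable.
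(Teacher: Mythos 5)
Your argument is correct and is essentially the paper's own: the authors likewise observe that quasi-admissibility makes each $\mathcal F_n$ locally free of finite (constant) rank, so every fiber has the same weight multiset as $\mathcal F$, and then invoke generic irreducibility to find a fiber over a point where the pair is $(\mathfrak{sl}(2,\mathbb C),H)$ and the module is irreducible and admissible. You have merely spelled out the details (constancy of the ranks, existence of a suitable $x_0$ off $\{0,\infty\}$ and off the countable reducibility locus) that the paper leaves implicit.
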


 We list all possibilities for  weights of such modules  in Table \ref{tab1new}.\footnote{We have included limit discrete series as discrete series, since for the purposes of the current work there is really no difference between the two.}  Each weight   appears with multiplicity at most one and all the weights of a given  irreducible admissible module share the same parity.   This list of weights is our second invariant of a    generically irreducible quasi-admissible $(\boldsymbol{\mathfrak{g}},\boldsymbol{K})$-module.
  \begin{table}[ht]
 \begin{center}
\begin{tabular}{|c|c|} 
\hline
 {\bf Module}   &   {\bf Weights}  \\
\hline
even principal series  &   $2\mathbb{Z}$  \\
\hline
odd principal series&   $2\mathbb{Z}+1$   \\
\hline
positive discrete   series,   $\ell\ge 1$ &   $   \ell + 2\mathbb{N}$   \\
\hline
negative discrete   series, $\ell \le -1$    &   $\ell   -  2 \mathbb N $ \\
\hline
{}\qquad  finite-dimensional, $k\ge 0$\qquad{}  & \qquad  $k, k{-}2,\dots , -k$\qquad{} \\
\hline
 \end{tabular}
 \end{center}
 \caption{ The weights  of admissible irreducible $(\mathfrak{sl}(2,\mathbb{C}),H)$-modules.}\label{tab1new}
\end{table}

\subsection{Further invariants}

The action of the Casimir sheaf  and the weight decomposition  are not enough to completely determine a  generically irreducible algebraic family of Harish-Chandra modules $\mathcal F$ up to isomorphism.  That is, they are not enough to determine  the morphisms of sheaves
\begin{equation}
\label{eq-first-action}
\boldsymbol{\mathfrak{g}}_2 \otimes \mathcal F_n \longrightarrow \mathcal{F}_{n+2}
\end{equation}
and 
\begin{equation}
\label{eq-second-action}
\boldsymbol{\mathfrak{g}}_{-2} \otimes \mathcal F_{n+2} \longrightarrow \mathcal{F}_{n}
\end{equation}
for all $n$.   The action of the Casimir sheaf \emph{is} sufficient to determine the composition 
\[
\boldsymbol{\mathfrak{g}}_{-2} \otimes \boldsymbol{\mathfrak{g}}_2 \otimes \mathcal F_n \longrightarrow \mathcal{F}_{n},
\]
as we shall see below, and so each  of the morphisms \eqref{eq-first-action} and \eqref{eq-second-action} determines the other.  We need further invariants, and, as we shall see, in at least some cases these are conveniently provided by the degrees of the sheaves $\mathcal F_n$.

\subsubsection{Degrees of the weight components}

 If $\mathcal {F} = \oplus_{n\in \mathbb Z} \mathcal{F}_n$  is the weight decomposition of a generically irreducible quasi-admissible $(\boldsymbol{\mathfrak{g}},\boldsymbol{K})$-module, then we have seen that each nonzero $\mathcal{F}_n$ is a locally free sheaf of $O_X$-modules of rank  one.  Since $X{=}\mathbb{CP}^1$, the sheaf   of $O_X$-modules   $\mathcal{F}_n$ is therefore determined  up to  isomorphism by its degree, $\deg (\mathcal{F}_n)$.   
 
 The function $n\mapsto \deg(\mathcal{F}_n)$, which is defined on the set of weights of $\mathcal{F}$, will be    our third invariant.

\begin{lemma} 
Let $\mathcal {F} = \oplus \mathcal{F}_n$  be the weight decomposition of a generically irreducible quasi-admissible $(\boldsymbol{\mathfrak{g}},\boldsymbol{K})$-module.  If  $\mathcal{F}_{n}$ and $\mathcal{F}_{n+2}$ are nonzero, then:
\begin{enumerate}[\rm (a)]  
\item $ \deg ( \mathcal{F}_n )-1 \le   \deg ( \mathcal{F}_{n+2})   \le\deg ( \mathcal{F}_n ) + 1  .$
\item  If  $ \deg ( \mathcal{F}_n )-1 =   \deg ( \mathcal{F}_{n+2})$ ,
then the morphism
\[
\boldsymbol{\mathfrak{g}}_2 \otimes \mathcal F_n \longrightarrow \mathcal{F}_{n+2}
\]
is an isomorphism.
\item If $\deg ( \mathcal{F}_{n+2})=\deg ( \mathcal{F}_n )+1$,
 then the morphism
\[
\boldsymbol{\mathfrak{g}}_{-2} \otimes \mathcal F_{n+2} \longrightarrow \mathcal{F}_{n}
\]
is an isomorphism.
\end{enumerate}
\end{lemma}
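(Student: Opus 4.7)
The plan is to reduce the statement to elementary facts about morphisms of line bundles on $\mathbb{CP}^1$.  Set $d_n = \deg(\mathcal F_n)$.  Since $\boldsymbol{\mathfrak{g}}_{\pm 2} \cong O(-1)$ by the previous lemma, the two structure morphisms
\[
\phi^+_n \colon \boldsymbol{\mathfrak{g}}_2 \otimes_{O_X} \mathcal F_n \longrightarrow \mathcal F_{n+2},
\qquad
\phi^-_n \colon \boldsymbol{\mathfrak{g}}_{-2} \otimes_{O_X} \mathcal F_{n+2} \longrightarrow \mathcal F_n
\]
are morphisms of line bundles of the shapes $O(d_n - 1) \to O(d_{n+2})$ and $O(d_{n+2} - 1) \to O(d_n)$ respectively.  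Because $\mathrm{Hom}_{O_X}(O(a), O(b)) = 0$ when $a > b$, and because any nonzero morphism between two invertible sheaves of equal degree on $\mathbb{CP}^1$ is an isomorphism, the entire lemma will follow once I verify that both $\phi^+_n$ and $\phi^-_n$ are nonzero as morphisms of sheaves.

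For the nonvanishing I would invoke generic irreducibility.  Restrict attention to the open set $X \setminus \{0,\infty\}$, on which $\boldsymbol{\mathfrak g}|_x \cong \mathfrak{sl}(2,\mathbb C)$ and the rational sections $X, Y$ supplied in the previous lemma are regular, nowhere vanishing, and give trivializations of $\boldsymbol{\mathfrak{g}}_{\pm 2}$.  Generic irreducibility, together with the remark that at most countably many fibers can fail to be irreducible, supplies a point $x$ in this open set at which $\mathcal F|_x$ is an irreducible $(\mathfrak{sl}(2,\mathbb C), H)$-module; since $\mathcal F_n$ and $\mathcal F_{n+2}$ are nonzero invertible sheaves, their fibers at $x$ are each one-dimensional.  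A standard submodule argument then forces the raising and lowering operators of $\mathfrak{sl}(2,\mathbb C)$ to act nonzero between any two adjacent nonzero weight spaces of an irreducible weight module: if, for instance, the raising operator annihilated $\mathcal F_n|_x$, then $\bigoplus_{m \le n} \mathcal F_m|_x$ would be a proper nonzero submodule of $\mathcal F|_x$, contradicting irreducibility, and symmetrically for the lowering operator.  Under the trivializations by $X$ and $Y$ this translates into $\phi^+_n|_x \neq 0$ and $\phi^-_n|_x \neq 0$, so the sheaf morphisms $\phi^{\pm}_n$ are themselves nonzero.

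With the nonvanishing in hand, the three parts fall out at once: $\phi^+_n \neq 0$ forces $d_n - 1 \le d_{n+2}$, $\phi^-_n \neq 0$ forces $d_{n+2} - 1 \le d_n$, and together these give (a); if either inequality is sharp, the corresponding $\phi$ is a nonzero map between line bundles of equal degree and therefore an isomorphism, yielding (b) or (c).  The only step that carries real content is the nonvanishing argument in the middle paragraph, and its one subtlety is simply to upgrade the fiber-level computation at a single generic point to the statement that the sheaf morphisms $\phi^{\pm}_n$ are nonzero; everything else is bookkeeping about degrees of invertible sheaves on $\mathbb{CP}^1$.
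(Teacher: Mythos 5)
Your proof is correct and follows essentially the same route as the paper's: the degree inequalities come from the existence of nonzero morphisms $O(d_n-1)\to O(d_{n+2})$ and $O(d_{n+2}-1)\to O(d_n)$, and the isomorphism statements from the fact that a nonzero map between invertible sheaves of equal degree on $\mathbb{CP}^1$ is an isomorphism. The only difference is that you spell out, via the fiberwise submodule argument at a generic irreducible fiber, why the structure morphisms are nonzero --- a point the paper's proof simply asserts.
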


\begin{proof}
In the generically irreducible case the action determines non-zero morphisms of sheaves
\[
\boldsymbol{\mathfrak{g}_2} \otimes \mathcal{F}_n \longrightarrow   \mathcal{F}_{n+2}
\]
and
\[
\boldsymbol{\mathfrak{g}_{-2}} \otimes \mathcal{F}_{n+2} \longrightarrow   \mathcal{F}_{n}
\]
Such nonzero morphisms exist only when 
\[
\deg(\boldsymbol{\mathfrak{g}_2}) + \deg( \mathcal{F}_n)\le   \deg( \mathcal{F}_{n+2}) 
\]
and
\[
\deg(\boldsymbol{\mathfrak{g}_{-2}}) + \deg( \mathcal{F}_{n+2})\le   \deg( \mathcal{F}_{n}) ,
\]
respectively.  So the first part of the lemma follows from the the fact that the sheaves $\boldsymbol{\mathfrak{g}}_{\pm 2}$ have degree minus one. The rest of the lemma follows from the fact that for $X{=}\mathbb{CP}^1$, any nonzero morphism between two invertible sheaves of $O_X$-modules with equal degrees is an isomorphism. 
\end{proof}

\subsection{Basis for a generically irreducible module}
\label{sec4.8}

Let $\mathcal F$ be a  generically irreducible quasi-admissible  $(\boldsymbol{\mathfrak{g}},\boldsymbol{K})$-module, with weight decomposition
\[
\mathcal{F} = \bigoplus_{n\in\mathbb Z}\mathcal{F}_n
\]
We have seen that each  nonzero weight space $\mathcal F_n$ is a locally free sheaf of $O_X$-modules of rank one. Keeping in mind that $X = \mathbb{CP}^1$, we find that  each non-zero weight space is isomorphic to some $O(k)$, and we can choose a section of the following sort  that will be convenient for computations.

\begin{lemma} 
Let $\mathcal F$ be a  generically irreducible $(\boldsymbol{\mathfrak{g}},\boldsymbol{K})$-module.  
\begin{enumerate}[\rm (i)]

\item Each nonzero weight space  $\mathcal{F}_n$  has  a rational section $f_n$  that is regular except perhaps at $z=\infty$, and also nowhere vanishing except perhaps    at $z=\infty$. Moreover   $\ord_\infty (f_n) = \deg (\mathcal F_n)$.  

\item The section $f_n$  is unique, up to multiplication by a nonzero complex scalar.  \qed
\end{enumerate}
\end{lemma}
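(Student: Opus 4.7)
The plan is to exploit the fact that $\mathcal{F}_n$ is an invertible sheaf on $X = \mathbb{CP}^1$, together with the triviality of the Picard group of the affine line. Since we are told (in Section~\ref{sec4.8}'s preamble) that each nonzero $\mathcal{F}_n$ is a rank-one locally free $O_X$-module, the restriction $\mathcal{F}_n \big|_{\mathbb{C}}$ to the affine chart $U_0 = \mathbb{CP}^1 \setminus \{\infty\} \cong \mathbb{C}$ is a free $O_{\mathbb{C}}$-module of rank one, because $\operatorname{Pic}(\mathbb{A}^1) = 0$. So the very first step is to pick any free generator $f_n$ of $\mathcal{F}_n\big|_{\mathbb{C}}$. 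By construction such a section is regular everywhere on $\mathbb{C}$ and nowhere vanishing there; extending by restriction, it defines a rational section of $\mathcal{F}_n$ on all of $\mathbb{CP}^1$, whose only potential zero or pole sits at $\infty$.

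Next I would compute $\ord_\infty(f_n)$ via the degree-of-divisor formula for line bundles on $\mathbb{CP}^1$. For any invertible sheaf $\mathcal{L}$ on $\mathbb{CP}^1$ and any nonzero rational section $s$, one has $\deg(\mathcal{L}) = \deg(\operatorname{div}(s)) = \sum_{z \in \mathbb{CP}^1} \ord_z(s)$. Applied to $f_n$, the only surviving term is $\ord_\infty(f_n)$, so this equals $\deg(\mathcal{F}_n)$, as claimed in (i).

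For (ii), suppose $f_n'$ is another rational section of $\mathcal{F}_n$ with the same regularity and non-vanishing properties. The ratio $g = f_n'/f_n$ is then a well-defined nonzero element of the function field of $\mathbb{CP}^1$. Since both sections are regular and nowhere vanishing on $\mathbb{C}$, $g$ lies in $O(\mathbb{C})^\times = \mathbb{C}[z]^\times = \mathbb{C}^\times$, so $g$ is a nonzero constant. (If one wanted to avoid invoking non-vanishing on $\mathbb{C}$ for $f_n'$, one could instead note that $g$ has the same order $\deg(\mathcal{F}_n)$ at $\infty$ as both $f_n$ and $f_n'$, so $\operatorname{div}(g) = 0$ on all of $\mathbb{CP}^1$, forcing $g \in \mathbb{C}^\times$.)

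No step looks like a real obstacle here; the content is essentially the standard identification $H^0(\mathbb{A}^1, \mathcal{L}|_{\mathbb{A}^1}) \cong \mathbb{C}[z]$ together with the fact that the only units in $\mathbb{C}[z]$ are nonzero constants. The one place requiring a sentence of care is the very first reduction to ``invertible sheaf,'' which uses generic irreducibility to conclude that the rank of each nonzero $\mathcal{F}_n$ is exactly one (rather than merely finite); this has already been established in the passage preceding the lemma and may simply be cited.
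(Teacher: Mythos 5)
Your argument is correct and is exactly the standard one the authors evidently had in mind: the paper states this lemma with no proof (it is \textup{\qedsymbol}'d in the statement), relying on the facts that $\operatorname{Pic}(\mathbb{A}^1)=0$, that $\deg(\mathcal{F}_n)=\sum_z \ord_z(f_n)$ for any nonzero rational section of an invertible sheaf on $\mathbb{CP}^1$, and that the units of $\mathbb{C}[z]$ are the nonzero constants. Nothing is missing; your citation of the earlier passage establishing that each nonzero $\mathcal{F}_n$ is invertible is the right way to handle that prerequisite.
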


 Now consider the   rational sections $X$, $H$ and $Y$ of the weight spaces $\boldsymbol{\mathfrak{g}}_{2}$, $\boldsymbol{\mathfrak{g}}_{0}$ and $\boldsymbol{\mathfrak{g}}_{-2}$ introduced in Paragraph~\ref{sec-casimir-section}.
If we fix sections $f_n$ as in the lemma above, then
 \begin{equation*}
\begin{aligned}
Hf_n &  =nf_{n} \\
Xf_n  & =A_n  f_{n+2} \\
Yf_{n+2} &= z^{-1}B_{n} f_{n}
\end{aligned}
\end{equation*}
for some polynomial functions  $A_n,B_n\in \mathbb{C}[z]$, where as before $z=\frac{\alpha}{\beta}$. 

The degrees of $A_n(z)$ and $B_n(z)$ are constrained by the degrees of the sheaves $\mathcal{F}_n$ and $\mathcal{F}_{n+2}$ (whenever these sheaves are nonzero).  Namely the formulas above lead to the inequalities
\[
1 -  \deg (\mathcal {F}_n) \ge   \deg (A_n ) - \deg(\mathcal {F}_{n+2}) 
\]
and 
\[
1 -  \deg (\mathcal {F}_{n+2}) \ge   \deg (B_n ) -  \deg(\mathcal {F}_{n}) .
\]

Now from  its formula we find that  the     Casimir section $C$ (see Paragraph~\ref{sec-casimir-section} again) acts on $f_n$  as multiplication by the polynomial function
\[
z \longmapsto  (n^2+2n)+ 4A_n (z) B_n (z)z^{-1} .
\]
This must of course be independent of $n$, which is a constraint on the collection of all $A_n$ and $B_n$. 
\subsection{The case of varying degrees}
 
Let $\mathcal F$ be a  generically irreducible  $(\boldsymbol{\mathfrak{g}},\boldsymbol{K})$-module. We have seen that 
\[
 \deg ( \mathcal{F}_n )-1 \le   \deg ( \mathcal{F}_{n+2})   \le\deg ( \mathcal{F}_n ) + 1   
\]
whenever $\mathcal {F}_n$ and $\mathcal F_{n+2}$ are nonzero.  The following computation shows that if $\deg(\mathcal F_n)$ always differs from $\deg(\mathcal F_{n+2})$, then the invariants that we have identified serve to parametrize generically irreducible algebraic families of Harish-Chandra modules.

\begin{theorem}
The isomorphism class of a   generically irreducible  $(\boldsymbol{\mathfrak{g}},\boldsymbol{K})$-module $\mathcal F$  for which $\deg(\mathcal F_n)$ always differs from $\deg(\mathcal F_{n+2})$ is   determined by:
\begin{enumerate}[\rm (i)]
\item  the list of weights of $\mathcal{F}$,
\item the degrees $\deg(\mathcal{F}_n)$, and 
\item the action of the Casimir section.
\end{enumerate}
\end{theorem}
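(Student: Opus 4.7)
My plan is to reconstruct a generically irreducible module up to isomorphism from the three listed invariants by choosing canonical local trivializations of its weight components and using the degree-gap hypothesis to rigidify the structure constants of the $\boldsymbol{\mathfrak g}_{\pm 2}$-actions.

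Suppose $\mathcal F$ and $\mathcal F'$ are two generically irreducible quasi-admissible modules sharing the invariants (i)--(iii). First I will fix sections $f_n$ of each nonzero $\mathcal F_n$ and $f'_n$ of each nonzero $\mathcal F'_n$ as provided by the lemma of Paragraph~\ref{sec4.8}, so that relative to these bases
\[
X f_n = A_n f_{n+2}, \qquad Y f_{n+2} = z^{-1} B_n f_n,
\]
with $A_n, B_n \in \mathbb C[z]$, and analogously $A'_n, B'_n$ for $\mathcal F'$. The degree bounds from Paragraph~\ref{sec4.8}, combined with the hypothesis $\deg(\mathcal F_{n+2}) = \deg(\mathcal F_n) \pm 1$, force one of $A_n, B_n$ to be a nonzero constant (nonzero thanks to generic irreducibility): specifically $A_n$ in the decreasing case, $B_n$ in the increasing case.

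Next I will use invariant (iii), together with the identification of the Casimir eigenvalue as $(n^2{+}2n) + 4 A_n(z) B_n(z) z^{-1}$ from Paragraph~\ref{sec4.8}, to deduce that $A_n(z) B_n(z) = A'_n(z) B'_n(z)$ for every $n$. In the decreasing case, setting $\mu_n = A'_n / A_n \in \mathbb C^\times$ then forces $B_n = \mu_n^{-1} B'_n$ automatically; the increasing case is the same with the roles of $A$ and $B$ interchanged. Thus the structure constants of $\mathcal F$ and $\mathcal F'$ agree up to a single scalar at each weight level.

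Finally I will build an intertwining isomorphism $\varphi \colon \mathcal F \to \mathcal F'$ of $O_X$-modules by declaring $\varphi(f_n) = \lambda_n f'_n$ for scalars $\lambda_n \in \mathbb C^\times$ satisfying the recursion $\lambda_{n+2}/\lambda_n = A'_n/A_n$, which by the previous step equals $B_n/B'_n$ as well. The recursion is initiated at an extremal weight in the finite-dimensional or discrete-series cases of Table~\ref{tab1new}, and at any single weight of fixed parity in the principal-series case, and its solution is unique up to one overall nonzero scalar. On each weight component $\varphi$ is then a nonvanishing morphism between invertible sheaves of equal degree on $\mathbb{CP}^1$, hence an isomorphism of $O_X$-modules; the $\boldsymbol K$- and $\boldsymbol{\mathfrak g}_0$-equivariance is automatic from the weight decomposition, and the $X$- and $Y$-equivariance is precisely what the recursion was designed to enforce. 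The step I expect to demand the most care is the consistency of the two expressions $A'_n/A_n$ and $B_n/B'_n$ for $\lambda_{n+2}/\lambda_n$: this is exactly the place where hypothesis (iii) is essential, and it also explains why the degree-gap assumption cannot be dropped, since without it the factorization of $A_n B_n$ between $A_n$ and $B_n$ would yield genuinely new invariants beyond (i)--(iii).
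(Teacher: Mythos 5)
Your proof is correct and follows essentially the same route as the paper: the degree-gap hypothesis forces one of $A_n,B_n$ to be a nonzero constant, the Casimir equation then determines the other, and matching suitably rescaled basis sections yields the intertwining isomorphism (the paper phrases this as normalizing $A_n=1$ or $B_n=1$, which is equivalent to your recursion for the $\lambda_n$). One small slip: from $A_nB_n=A'_nB'_n$ and $\mu_n=A'_n/A_n$ you should get $B_n=\mu_n B'_n$ rather than $\mu_n^{-1}B'_n$, but your final consistency relation $\lambda_{n+2}/\lambda_n=A'_n/A_n=B_n/B'_n$ is the correct one, so the argument stands.
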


\begin{proof}
We know the weights of $\mathcal F$ constitute a finite, semi-infinite or infinite arithmetic progression in $\mathbb Z$ with interval $2$.  Consider any two consecutive weight spaces $\mathcal{F}_n$ and $\mathcal {F}_{n+2}$.  If 
\[
 \deg ( \mathcal{F}_{n+2} )  =      \deg ( \mathcal{F}_{n}) -1 
 \]
 then, referring to the notation and the inequalities in the previous paragraph, we find that $A_n$ is a constant polynomial.  It must be nonzero since $\mathcal F$ is generically irreducible. As for $B_n$, the formula at the end of previous section tells us that it is determined by the action of the Casimir section and the value of the constant $A_n$.
 
  The other possibility is that 
\[
  \deg ( \mathcal{F}_{n})  =   \deg ( \mathcal{F}_{n+2} ) - 1,
 \]
in which case  $B_n$ is a nonzero constant, while $A_n$ is determined by this constant and  the value of the Casimir section.

We can adjust the sections $f_n$ by multiplication with nonzero scalars so that in either case the constants above are always $A_n=1$ in the first case and $B_n=1$ in the second case.  Given any two such generically irreducible modules with the same weights,  degrees of weight spaces,  and actions of the Casimir section, there is a unique isomorphism of sheaves of $O_X$-modules that corresponds these adjusted sections $f_n$, and it is an isomorphism of  $(\boldsymbol{\mathfrak{g}},\boldsymbol{K})$-modules. 
\end{proof}
 
\subsection{The case of equal degrees}

At the other extreme are the generically irreducible algebraic families of Harish-Chandra modules for which all the degress $\deg (\mathcal{F}_n)$ are equal.  We  shall  briefly study them in this paragraph.

In this case an  analysis like the one in the previous  paragraph  shows only   that 
\[
\deg (A_n) \le 1 \quad\text{and} \quad \deg(B_n) \le 1.
\]
And indeed, given a generically irreducible quasi-admissible module with equal degrees, we can obtain a new one by simply exchanging any number of the  $A_n$ with  $B_n$, so that  the new action of $\boldsymbol{\mathfrak {g}}$ is defined by
\[
Xf_n    =B_n  f_{n+2}\quad \text{and} \quad 
Yf_{n+2}  = z^{-1}A_{n} f_{n}
\]
for these chosen values of $n$.  The new module will be isomorphic to the old one if and only if $A_n$ is a multiple of $B_n$ for all these chosen $n$, which is impossible for more than two choices of $n$.

So by starting with one  generically irreducible module with equal degrees, we construct in this way an uncountable family of modules all with the same Casimir, weight and degree invariants.

\subsection{Some classes of generically irreducible modules}

The observations made in the previous two paragraphs show that there are too many generically irreducible modules to admit a reasonable classification (although  it is indeed possible to carry out that classification).  For this reason it seems worthwhile to consider some subclasses, and this is what we shall do in this section.   Similar subclasses were studied before in \cite{Noort}.

All of the subclasses will consist of modules for which $\deg(\mathcal F_n)$ always differs from $\deg(\mathcal F_{n+2})$.  The first (which is actually an infinite family, parametrized by weights $k$) is as follows: 
\begin{enumerate}[\rm I{${}_k$}.]

\item  Modules with   $\mathcal F_k$ nonzero and 
	\begin{enumerate}[\rm (i)]
		\item   $\deg(\mathcal F_{n+2}) = \deg (\mathcal F_n) -1$ whenever $n\ge k$,
		\item $\deg(\mathcal F_{n-2}) = \deg (\mathcal F_n) -1$ whenever $n\le k$.
	\end{enumerate}	
\end{enumerate}
To say that $\deg(\mathcal F_{n\pm 2}) = \deg (\mathcal{F}_n ) - 1$ is to say that the morphism
\[
\boldsymbol{\mathfrak {g}}_{\pm 2} \otimes _{O_X} \mathcal F_n \longrightarrow \mathcal F_{n\pm 2}
\]
is an isomorphism.  As a result, the modules in class $I_k$ can alternately be described as those modules that are generated, as algebraic families of $(\boldsymbol{\mathfrak{g}}, \boldsymbol{K})$-modules or equivalently as ordinary  Harish-Chandra modules in every fiber, by their $k$-th isotypical parts. 

Similarly we can consider the ``dual'' class
\begin{enumerate}[\rm I{${}_k$}.]
	  \setcounter{enumi}{1}
\item  Modules with   $\mathcal F_k$ nonzero and 
	\begin{enumerate}[\rm (i)]
		\item   $\deg(\mathcal F_{n+2}) = \deg (\mathcal F_n) +1$ whenever $n\ge k$,
		\item $\deg(\mathcal F_{n-2}) = \deg (\mathcal F_n) + 1$ whenever $n\le k$.
	\end{enumerate}
 \end{enumerate}
 These are the modules $\mathcal F$ for which every $(\boldsymbol{\mathfrak{g}}, \boldsymbol{K})$-submodule contains the $k$-th isotypical part of $\mathcal F$.
 
The classes $I_k$ and $II_k$ exhibit phenomena seen in individual (reducible) principal series representations, which is why they might deserve further study. On the other hand the following two classes seem very natural in their own rights: 
\begin{enumerate}[\rm I.]
	  \setcounter{enumi}{2}
	  
\item   
Modules with $\deg(\mathcal F_{n+2}) = \deg (\mathcal F_n)  + 1$  for all $n$.

\item 
Modules  with  $\deg(\mathcal F_{n+2}) = \deg (\mathcal F_n) -1$ for all $n$.

\end{enumerate}
 One might say that these are the modules generated by high enough, or low enough, isotypical parts, respectively.

 \begin{remark}
 The Picard group  of isomorphism classes of invertible sheaves  on $X$  acts by tensor product on   algebraic families of Harish-Chandra modules over $(\boldsymbol{\mathfrak{g}}, \boldsymbol{K})$.  For this reason it would be harmless to assume in addition    that for example the $k$-th weight space has degree zero in cases $I_k$ and $II_k$, or that the degree of $\mathcal {F}_n$ is precisely $\lfloor \frac n2\rfloor$ or $-\lfloor \frac n2\rfloor$ in cases $III$ or $IV$, respectively.
\end{remark}

\subsubsection{The principal series case}
\label{sec-principal-series-case}
In this paragraph we shall show how to construct   a generically irreducible $(\boldsymbol{\mathfrak{g}},\boldsymbol{K})$-mod\-ule $\mathcal F$ with weights  $2\mathbb{Z}$ or $2\mathbb{Z}{+}1$ that lies in any of the four classes above and has  any value of the   Casimir section  
\[
c_1z+c_0+ c_{-1}z^{-1} ,
\] 
except for certain constant integral values, which can never occur.

The excluded values are the constants $m(m+2)$, where  $m\in \mathbb Z$ has the same parity as the weights of $\mathcal F$.  To see this, we refer  to the notation from Section~\ref{sec4.8}. When the Casimir section assumes the constant value $m(m+2)$, the \emph{Casimir equation}  relating the Casimir section to the polynomials $A_n$ and $B_n$ is 
\[
m(m+2) = n(n+2) + A_n(z)B_n(z)z^{-1}.
\]
So one of $A_m$ or $B_m$ is zero, which cannot happen in a generically irreducible module.

If the excluded values are avoided, then a module in each of the classes $I$-$IV$ can be constructed using the methods of Section~\ref{sec4.8}, as follows.
The   Casimir equation never forces the product $A_nB_n$ to be the zero polynomial, and we can always choose a solution with one of $A_n$ or $B_n$ to be $1$, and the other to be a quadratic polynomial,  in such a way that the degrees of $A_n$ and $B_n$ satisfy   the inequalities in Section~\ref{sec4.8}.

When the inequalities are satisfied, the polynomials $A_n$ and $B_n$   define   morphisms of sheaves 
\[
 \boldsymbol{\mathfrak{g}}_{2}\otimes \mathcal {F}_{n} \longrightarrow  \mathcal {F}_{n+2}
\]
and
\[
 \boldsymbol{\mathfrak{g}}_{-2}\otimes \mathcal {F}_{n+2} \longrightarrow  \mathcal {F}_{n} .
\]
The fact that $A_n$ and $B_n$ solve the Casimir equation implies that these morphisms define a generically irreducible, quasi-admissible family of $(\boldsymbol{\mathfrak{g}}, \boldsymbol{K})$-modules.

 \subsubsection{The non-principal series cases}
 \label{sec-non-principal-case}
 
 When the weights of a quasi-admissible and   generically irreducible family of $(\boldsymbol{\mathfrak{g}}, \boldsymbol{K})$-modules fall into one of the non-principal series cases listed in Table~\ref{tab1new}, the situation is completely different from the one described above. 
 
\begin{lemma}
If the weights of  a generically irreducible $(\boldsymbol{\mathfrak{g}},\boldsymbol{K})$-module, $\mathcal{F}$, coincide with those of a discrete series, limit discrete series, or irreducible finite dimensional $(sl_2(\mathbb{C}),H)$-module, then the  Casimir section $C$ acts by multiplication by a constant function. The constant is 
\begin{eqnarray}
&& \begin{cases}
l^2-2|l| & \text{for discrete or limit discrete series, and}\\
k^2+2k & \text{for finite-dimesional representations,}
\end{cases}
\end{eqnarray}
where we refer to \textup{Table~\ref{tab1new}} for the indexing.
\label{lem15}
\end{lemma}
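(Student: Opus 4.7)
The plan is to use the two equivalent expressions for the Casimir section from Paragraph~\ref{sec-canonical-casimir-section},
\[
C = H^2 + 2H + 4YX = H^2 - 2H + 4XY,
\]
together with the fact that in each non-principal-series case the weight set is bounded above, below, or both. By Lemma~\ref{lem12}, the Casimir acts on $\mathcal{F}$ by multiplication by a rational function of the form $c_1 z + c_0 + c_{-1} z^{-1}$, so once I evaluate its action on a single weight vector $f_n$ and see that the result is a complex scalar, this rational function must be that scalar.

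First I would handle the positive discrete series, with weights $\ell + 2\mathbb{N}$ and $\ell \ge 1$. Since $\mathcal{F}_{\ell-2} = 0$, the formula $Y f_\ell = z^{-1}B_{\ell-2} f_{\ell-2}$ from Paragraph~\ref{sec4.8} is forced to be $0$, because the right-hand side lives in a zero sheaf. Applying the form $C = H^2 - 2H + 4XY$ to $f_\ell$ then gives
\[
C f_\ell = (\ell^2 - 2\ell) f_\ell + 4X(Yf_\ell) = (\ell^2 - 2\ell) f_\ell,
\]
so $c_1 z + c_0 + c_{-1} z^{-1}$ must be the constant $\ell^2 - 2\ell = \ell^2 - 2|\ell|$. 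The negative discrete series case (weights $\ell - 2\mathbb{N}$ with $\ell \le -1$) is symmetric: now $\mathcal{F}_{\ell+2} = 0$ forces $X f_\ell = 0$, and the form $C = H^2 + 2H + 4YX$ gives $C f_\ell = (\ell^2 + 2\ell) f_\ell = (\ell^2 - 2|\ell|) f_\ell$.

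For the irreducible finite-dimensional module with weights $k, k-2, \ldots, -k$ and $k \ge 0$, the same argument applied to the highest weight $k$ uses $\mathcal{F}_{k+2} = 0$ to get $X f_k = 0$, and then $C = H^2 + 2H + 4YX$ yields $C f_k = (k^2 + 2k) f_k$. As a consistency check, evaluating at the lowest weight $-k$ via $Y f_{-k} = 0$ and the form $C = H^2 - 2H + 4XY$ gives the same constant $(-k)^2 - 2(-k) = k^2 + 2k$. The limit discrete series are covered by the $\ell = \pm 1$ instances of the discrete series calculation, in accordance with the convention of the footnote to Lemma~\ref{lem4.3.3}.

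There is essentially no hard step. The only point requiring any care is the bookkeeping at the extremal weight: the vanishing of an adjacent weight space genuinely forces the raising or lowering operator to annihilate $f_\ell$ (or $f_k$), because there is literally no nonzero section of the relevant $\mathcal{F}_{n\pm 2}$ into which it could map. Once that observation is in place, the correct choice between the two forms of the Casimir identity makes the remaining $XY$ or $YX$ term disappear, and the computation reduces to the eigenvalue of $H^2 \pm 2H$ on the extremal weight vector.
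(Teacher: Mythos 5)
Your proof is correct and takes essentially the same route as the paper's, which simply observes that each of these cases has an extreme weight space and that the Casimir's action there can be computed directly; you have supplied exactly that computation, choosing the form $H^2+2H+4YX$ or $H^2-2H+4XY$ of the Casimir section according to whether the extreme weight is highest or lowest so that the quadratic term vanishes. Your appeal to the earlier lemma on quasi-simplicity, which guarantees that $C$ acts by a single rational function of the form $c_1z+c_0+c_{-1}z^{-1}$ on all of $\mathcal F$ and hence that the scalar found on the extreme weight space is the constant value everywhere, is the right way to make the paper's one-line argument complete.
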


\begin{remark}
Note that $\ell^2 - 2 |\ell | = k(k+2)$ when $k>0$ and $ | \ell|  = k+2 $.
\end{remark}

\begin{proof}
In all of these cases there is an extreme (either highest or lowest) weight space, and it is a simple matter to directly compute the action of the Casimir for $SL(2)$ on this weight space, with the results indicated. \end{proof}

As for the existence of modules as above, it is a simple matter to construct a generically irreducible $(\boldsymbol{\mathfrak{g}},\boldsymbol{K})$-module with the indicated Casimir section in each of the classes $I$-$IV$ above (and indeed in any case where the degrees of successive weight modules differ by no more than $1$).

 \subsubsection{Classification of generically irreducible    modules}

 Let us summarize.    
 
 \begin{theorem}
 In each of the classes $I$-$IV$, the classification of generically irreducible quasi-admissible $(\boldsymbol{\mathfrak{g}},\boldsymbol{K})$-modules up to isomorphism and an action of the Picard group of $X$ is as follows:
 \begin{enumerate}[\rm (i)]
 \item The set of weights of any  generically irreducible quasi-admissible module is one of the possibilities listed in Table~\textup{\ref{tab1new}}.
 \item When there is an extreme weight, the Casimir section is determined by the extreme weight, as in Paragraph~\textup{\ref{sec-non-principal-case}}, and there is a unique module, up to isomorphism, with the given weights in the given class.
 \item When there is no extreme weight, the set of weights is either $2\mathbb Z$ or $2\mathbb Z{+}1$. The Casimir section cannot be one of the excluded possibilities listed in Paragraph~ \textup{\ref{sec-principal-series-case}}, but there is a unique module, up to isomorphism,  in each of the classes,
 with the given weights  and with  any other value of the Casimir section.\qed
  \end{enumerate}

 \end{theorem}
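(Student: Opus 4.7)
My plan is to assemble the ingredients that have been set up earlier in Section~\ref{section4}. Assertion (i) is immediate from Lemma~\ref{lem4.3.3}, which identifies the weights of any generically irreducible quasi-admissible family with those of an irreducible admissible $(\mathfrak{sl}(2,\mathbb{C}),H)$-module, listed in Table~\ref{tab1new}. So the substance of the theorem is in (ii) and (iii). For both, I would begin by noting that every module in any of the classes $I$--$IV$ satisfies $\deg(\mathcal{F}_n) \neq \deg(\mathcal{F}_{n+2})$ whenever both weight components are nonzero, so the ``varying degrees'' theorem applies: the isomorphism class of $\mathcal{F}$ is pinned down by the weight set, the degree function $n \mapsto \deg(\mathcal{F}_n)$, and the action of the Casimir section.

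Next I would factor out the action of the Picard group of $X = \mathbb{CP}^1$: tensoring with $O(d)$ shifts every $\deg(\mathcal{F}_n)$ uniformly by $d$. Modulo this action, the degree pattern is fixed by the class itself---descending by one to either side of $k$ in $I_k$, ascending by one to either side of $k$ in $II_k$, strictly rising per step in $III$, strictly falling per step in $IV$. So, once the class and the weight set from Table~\ref{tab1new} are prescribed, the classification reduces to enumerating the admissible Casimir sections.

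For (ii), in every row of Table~\ref{tab1new} other than the principal series rows there is an extreme weight. Lemma~\ref{lem15} then forces the Casimir section to equal one specific constant, namely $\ell^2 - 2|\ell|$ or $k(k+2)$ as appropriate. Uniqueness in each class follows from the varying-degrees theorem, and existence is the construction sketched in Paragraph~\ref{sec-non-principal-case} built on the recursion of Section~\ref{sec4.8}. For (iii), the lack of an extreme weight singles out the weight sets $2\mathbb{Z}$ and $2\mathbb{Z}+1$. Paragraph~\ref{sec-principal-series-case} shows that, in each of the four classes, a generically irreducible module exists for any prescribed Casimir section $c_1 z + c_0 + c_{-1} z^{-1}$ that avoids the excluded constants $m(m+2)$ (with $m$ of the right parity); the exclusion is genuine because at such a value the Casimir equation $m(m+2) = n(n+2) + A_n(z)B_n(z) z^{-1}$ forces some $A_m$ or $B_m$ to vanish identically, violating generic irreducibility. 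Uniqueness is again the varying-degrees theorem.

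The place where I expect real work is the bookkeeping behind the existence half of both (ii) and (iii): given the prescribed weight set, class, and target Casimir section, one has to produce polynomials $A_n,B_n$ satisfying both the degree inequalities of Section~\ref{sec4.8} and the Casimir equation, with a specified distribution of degrees between $A_n$ and $B_n$ dictated by the class. In classes $I_k$ and $II_k$ this requires checking compatibility at the ``corner'' weight $n=k$, and in the semi-infinite weight sets $\ell \pm 2\mathbb{N}$ it requires checking the behavior at the extreme weight. These are small, case-by-case verifications; once they are in hand, the theorem is a direct corollary of the varying-degrees theorem together with Lemmas~\ref{lem12}, \ref{lem4.3.3} and \ref{lem15}.
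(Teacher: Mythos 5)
Your proposal matches the paper's intended argument: the theorem is stated as a summary whose proof is exactly the assembly you describe --- the weight list from the lemma on weights, uniqueness via the varying-degrees theorem (applicable since all four classes have strictly varying degrees, with the Picard action absorbing the uniform degree shift), the forced constant Casimir value at an extreme weight, and existence from the constructions in Paragraphs~\ref{sec-principal-series-case} and~\ref{sec-non-principal-case}. Your identification of where the residual bookkeeping lies (the degree inequalities for $A_n$, $B_n$ at corner and extreme weights) is also consistent with what the paper leaves as routine verification.
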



\bibliography{references}

\def\cprime{$'$}
\begin{thebibliography}{SBBM12}

\bibitem[BL94]{Bernstein-Lunts}
J.~Bernstein and V.~Lunts.
\newblock {\em Equivariant sheaves and functors}, volume 1578 of {\em Lecture
  Notes in Mathematics}.
\newblock Springer-Verlag, Berlin, 1994.

\bibitem[Bor91]{Borel1991}
A.~Borel.
\newblock {\em Linear algebraic groups}, volume 126 of {\em Graduate Texts in
  Mathematics}.
\newblock Springer-Verlag, New York, second edition, 1991.

\bibitem[DR83]{Dooley83}
A.~H. Dooley and J.~W. Rice.
\newblock Contractions of rotation groups and their representations.
\newblock {\em Math. Proc. Cambridge Philos. Soc.}, 94(3):509--517, 1983.

\bibitem[DR85]{Dooley85}
A.~H. Dooley and J.~W. Rice.
\newblock On contractions of semisimple {L}ie groups.
\newblock {\em Trans. Amer. Math. Soc.}, 289(1):185--202, 1985.

\bibitem[Ful84a]{Fulton84-2}
W.~Fulton.
\newblock {\em Intersection theory}, volume~2 of {\em Ergebnisse der Mathematik
  und ihrer Grenzgebiete (3) [Results in Mathematics and Related Areas (3)]}.
\newblock Springer-Verlag, Berlin, 1984.

\bibitem[Ful84b]{Fulton84}
W.~Fulton.
\newblock {\em Introduction to intersection theory in algebraic geometry},
  volume~54 of {\em CBMS Regional Conference Series in Mathematics}.
\newblock Published for the Conference Board of the Mathematical Sciences,
  Washington, DC; by the American Mathematical Society, Providence, RI, 1984.

\bibitem[Har77]{Hartshorne1977}
R.~Hartshorne.
\newblock {\em Algebraic geometry}.
\newblock Springer-Verlag, New York-Heidelberg, 1977.
\newblock Graduate Texts in Mathematics, No. 52.

\bibitem[Hig08]{Higson08}
N.~Higson.
\newblock The {M}ackey analogy and {$K$}-theory.
\newblock In {\em Group representations, ergodic theory, and mathematical
  physics: a tribute to {G}eorge {W}. {M}ackey}, volume 449 of {\em Contemp.
  Math.}, pages 149--172. Amer. Math. Soc., Providence, RI, 2008.

\bibitem[Hig11]{Higson11}
N.~Higson.
\newblock On the analogy between complex semisimple groups and their {C}artan
  motion groups.
\newblock In {\em Noncommutative geometry and global analysis}, volume 546 of
  {\em Contemp. Math.}, pages 137--170. Amer. Math. Soc., Providence, RI, 2011.

\bibitem[IW53]{Inonu-Wigner53}
E.~Inonu and E.~P. Wigner.
\newblock On the contraction of groups and their representations.
\newblock {\em Proc. Nat. Acad. Sci. U. S. A.}, 39:510--524, 1953.

\bibitem[Mac75]{Mackey75}
G.~W. Mackey.
\newblock On the analogy between semisimple {L}ie groups and certain related
  semi-direct product groups.
\newblock In {\em Lie groups and their representations ({P}roc. {S}ummer
  {S}chool, {B}olyai {J}\'anos {M}ath. {S}oc., {B}udapest, 1971)}, pages
  339--363. Halsted, New York, 1975.

\bibitem[OV90]{OnishchikVinberg}
A.~L. Onishchik and {\`E}.~B. Vinberg.
\newblock {\em Lie groups and algebraic groups}.
\newblock Springer Series in Soviet Mathematics. Springer-Verlag, Berlin, 1990.
\newblock Translated from the Russian and with a preface by D. A. Leites.

\bibitem[SBBM12]{Subag12}
E.~M. Subag, E.~M. Baruch, J.~L. Birman, and A.~Mann.
\newblock Strong contraction of the representations of the three-dimensional
  {L}ie algebras.
\newblock {\em J. Phys. A}, 45(26):265206, 25, 2012.

\bibitem[Seg51]{Segal51}
I.~E. Segal.
\newblock A class of operator algebras which are determined by groups.
\newblock {\em Duke Math. J.}, 18:221--265, 1951.

\bibitem[Spr98]{Springer}
T.~A. Springer.
\newblock {\em Linear algebraic groups}, volume~9 of {\em Progress in
  Mathematics}.
\newblock Birkh\"auser Boston, Inc., Boston, MA, second edition, 1998.

\bibitem[{Sta}16]{stacks-project}
The {Stacks Project Authors}.
\newblock Stacks {P}roject.
\newblock \url{http://stacks.math.columbia.edu}, 2016.

\bibitem[vdN09]{Noort}
V.~van~der Noort.
\newblock {\em Analytic parameter dependence of Harish-Chandra modules for real
  reductive Lie groups - a family affair}.
\newblock Ph.D. Thesis, Utrecht University, 2009.

\bibitem[Vis05]{Vistoli05}
A.~Vistoli.
\newblock Grothendieck topologies, fibered categories and descent theory.
\newblock In {\em Fundamental Algebraic Geometry: Grothendieck's FGA
  Explained}, Mathematical surveys and monographs. American Mathematical
  Society, 2005.

\end{thebibliography}
\bibliographystyle{alpha}

\end{document}